\theoremstyle{plane}
\newtheorem{thm}{Theorem}[section]
\newtheorem{prop}[thm]{Proposition}
\newtheorem{lem}[thm]{Lemma}
\newtheorem{cor}[thm]{Corollary}
\newtheorem{fact}[thm]{Fact}
\theoremstyle{definition}
\newtheorem{dfn}[thm]{Definition}
\newtheorem{exa}[thm]{Example}
\theoremstyle{remark}
\newcommand{\rank}{\operatorname{rank}}
\newcommand{\im}{\operatorname{Im}}
\newcommand{\hess}{\operatorname{Hess}}
\newcommand{\sgn}{\operatorname{sgn}}
\newcommand{\R}{\bm{R}}
\newcommand{\Sig}{\Sigma}
\numberwithin{equation}{section}
\renewcommand{\phi}{\varphi}
\renewcommand{\epsilon}{\varepsilon}
\newcommand{\what}{\widehat}
\newcommand{\til}{\tilde}
\newcommand{\wtil}{\widetilde}
\newcommand{\inner}[2]{\left\langle{#1},{#2}\right\rangle}
\begin{document}


\title[Gauss maps of fronts]{Singularities of Gauss maps of wave fronts 
with non-degenerate singular points}
\author[K. Teramoto]{Keisuke Teramoto}
\thanks{The author was supported by the Grant-in-Aid for JSPS Fellows Number 17J02151.}
\address{Department of Mathematics, 
Kobe University, 
Rokko 1-1, Nada, Kobe 657-8501, Japan}
\email{teramoto@math.kobe-u.ac.jp}
\subjclass[2010]{57R45, 53A05, 58K05}
\keywords{Gauss map, wave front, singularity, extended height function}

\maketitle
\begin{abstract}
We study singularities of Gauss maps of fronts and give characterizations of types of singularities of Gauss maps 
by geometric properties of fronts which are related to behavior of bounded principal curvatures. 
Moreover, we investigate relation between 
a kind of boundedness of Gaussian curvatures near cuspidal edges and types of singularities of Gauss maps 
of cuspidal edges. 
Further, we consider extended height functions on fronts with non-degenerate singular points. 
\end{abstract}

\section{Introduction}
In study of (extrinsic) differential geometry of surfaces in the Euclidean $3$-space $\R^3$, 
Gauss maps play important roles. 
For instance, singular points of the Gauss map coincide with the parabolic points,
namely, points at which the Gaussian curvature vanishes. 
On the other hand, it is known that height functions in the normal direction of surfaces have singularities. 
Height functions on surfaces measure types of contact of surfaces and planes.  
Generically, height functions on surfaces have $A_k$ singularities (\cite{ifrt}). 
In particular, height functions on surfaces have $D_4$ singularities at {flat umbilic points} (cf. \cite{bgt2}).

In this paper, we study singularities of Gauss maps of (\/wave\/) fronts and height functions. 
In general, the Gaussian curvature of a front is unbounded near singular points. 
However, the Gaussian curvature of a front is {\it rationally  bounded}, which is a kind of boundedness 
introduced by Martins, Saji, Umehara and Yamada \cite{msuy}, at a singular point 
which is also a singular point of the Gauss map. 
Therefore studying singularities of Gauss maps of fronts might be related to 
investigate the behavior of the Gaussian curvature of a front near a singular point. 
In fact, we will show relationships between rational boundedness and 
contact order of a singular curve and a parabolic curve for a cuspidal edge. 
To do this, we need to consider the behavior of a bounded principal curvature near 
a singular point of a front. 
It is known that a bounded principal curvature of a front coincides with 
the {\it limiting normal curvature} $\kappa_\nu$ at a non-degenerate singular point, 
which contains a cuspidal edge or a swallowtail. 
Moreover, a non-degenerate singular point is also a singular point of 
the corresponding Gauss map of a front if and only if $\kappa_\nu$ vanishes at this point. 
Thus, to study types of singularities of a Gauss map, 
we consider properties of a bounded principal curvature of a front at non-degenerate 
singular point (Theorem \ref{typegauss}). 
Further, we consider contact between the singular curve and the singular set of the Gauss map for a cuspidal edge. 
We give some geometric interpretations of a rational boundedness and a rational continuity 
of the Gaussian curvature of a cuspidal edge in terms of contact properties of two curves 
(Corollaries \ref{boundedness} and \ref{singbound}).

In addition, we study height functions on fronts with non-degenerate singular points of the second kind, 
which do not contain cuspidal edges. 
In particular, we consider contact between fronts with non-degenerate singular points of the second kind 
and their {\it limiting tangent planes} and show a condition that corresponding height functions 
have $D_4$ singularities in terms of differential geometric properties of initial fronts (Theorem \ref{D4phi}).

\section{Preliminaries}\label{sec:1}
\subsection{Fronts}\label{sec:2}
We review some notions of (wave) fronts. 
For details, see \cite{agv,ifrt,krsuy,msuy,suy2,suy}. 

Let $f:\Sig\to\R^3$ be a $C^\infty$ map, where $\Sig\subset(\R^2;u,v)$ is a domain. 
We then say that $f$ is a {\it frontal} if there exists a unit vector field $\nu$ along $f$ such that 
$\langle df_q(X_q),\nu(q)\rangle=0$ for any $q\in \Sig$ and $X_q\in T_q\Sig$, 
where $\langle\cdot,\cdot\rangle$ is a usual inner product of $\R^3$. 
This vector field $\nu$ is called a {\it unit normal vector} or the {\it Gauss map} of $f$. 
A frontal $f$ is called a ({\it wave}) {\it front} if the pair of mappings 
$L_f=(f,\nu):\Sig\to\R^3\times S^2$ gives an immersion, where $S^2$ denotes the unit sphere in $\R^3$. 

We fix a frontal $f$. 
A point $p\in\Sig$ is a {\it singular point} if $\rank df_p<2$ holds. 
We denote by $S(f)$ the set of singular points of $f$ 
and call the image $f(S(f))$ of $S(f)$ by $f$ the {\it singular locus}. 
We set a function $\lambda:\Sig\to\R$ by 
$$\lambda(u,v)=\det(f_u,f_v,\nu)(u,v),$$
where $\det$ is the usual determinant of $3\times3$ matrices, 
$f_u=\partial f/\partial u$ and $f_v=\partial f/\partial v$. 
We call this function $\lambda$ the {\it signed area density function}. 
It is obviously $S(f)=\lambda^{-1}(0)$. 
Take a singular point $p\in S(f)$. 
Then $p$ is {\it non-degenerate} if $d\lambda(p)\neq0$, that is, $(\lambda_u(p),\lambda_v(p))\neq(0,0)$. 
If $p$ is a non-degenerate singular point of $f$, 
then there exist a neighborhood $V\subset\Sig$ of $p$ and 
a regular curve $\gamma:(-\epsilon,\epsilon)\ni t\mapsto\gamma(t)\in V$ $(\epsilon>0)$ 
with $\gamma(0)=p$ such that $\im(\gamma)=S(f)\cap V$ by the implicit function theorem, 
where $\im(\gamma)$ is the image of $\gamma$. 
Moreover, there exists a never vanishing vector field $\eta$ on $S(f)\cap V$ 
such that for any $q\in S(f)\cap V$, $df_q(\eta_q)=0$ holds. 
We call $\gamma$, $\hat{\gamma}=f\circ\gamma$ and $\eta$ 
a {\it singular curve}, a {\it singular locus} and a {\it null vector field}, respectively.
If a vector field $\tilde{\eta}$ on $V$ satisfies $\tilde{\eta}|_{S(f)\cap V}=\eta$, 
then $\tilde{\eta}$ is called an {\it extended null vector field} (\cite{suy3,suy}). 

A non-degenerate singular point $p$ is called the {\it first kind} if $\det(\gamma',\eta)(0)\neq0$, 
where $'=d/dt$. 
Otherwise, $p$ is called the {\it second kind} (\cite{msuy}). 

\begin{dfn}
Let $f,g:(\R^2,0)\to(\R^3,0)$ be $C^\infty$ map germs. 
Then $f$ and $g$ are {\it $\mathcal{A}$-equivalent} if there exist diffeomorphism germs 
$\theta:(\R^2,0)\to(\R^2,0)$ and $\Theta:(\R^3,0)\to(\R^3,0)$ such that 
$\Theta\circ f=g\circ\theta$ holds.
\end{dfn}
\begin{dfn}
Let $f:(\Sig,p)\to(\R^3,f(p))$ be a map germ around $p$. 
Then $f$ at $p$ is a \textit{cuspidal edge} if the map germ $f$ is $\mathcal{A}$-equivalent to the map germ 
$(u,v)\mapsto(u,v^2,v^3)$ at ${0}$, 
$f$ at $p$ is a \textit{swallowtail} if the map germ $f$ is $\mathcal{A}$-equivalent to the map germ 
$(u,v)\mapsto(u,3v^4+uv^2,4v^3+2uv)$ at ${0}$.
\end{dfn}

These singularities are {\it generic singularities} of fronts in $\R^3$ (\cite{agv}). 
Moreover, a cuspidal edge is non-degenerate singular point of the first kind, and 
a swallowtail is of the second kind (\cite{msuy}). 

Criteria for these singularities are known.
\begin{fact}[\cite{krsuy,suy3}]\label{crit:non-deg}
Let $f:(\Sig,p)\to(\R^3,f(p))$ be a front germ and $p\in\Sig$ a singular point of $f$. 
Then the following assertions hold:
\begin{enumerate}
\item $f$ at $p$ is $\mathcal{A}$-equivalent to the cuspidal edge if and only if $\eta\lambda(p)\neq0$.
\item $f$ at $p$ is $\mathcal{A}$-equivalent to the swallowtail if and only if $d\lambda(p)\neq0$, 
$\eta\lambda(p)=0$ and $\eta\eta\lambda(p)\neq0$.
\end{enumerate}
\end{fact}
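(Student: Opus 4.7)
The plan is, for each criterion, to set up coordinates adapted to the singular curve, to integrate $f$ along a normalised null direction, and to conclude via finite $\mathcal{A}$-determinacy. Since $p$ is non-degenerate, the implicit function theorem provides coordinates $(u,v)$ centered at $p$ with $S(f)=\{v=0\}$ and $\gamma(u)=(u,0)$; extend the null vector field to a smooth $\tilde\eta=a(u,v)\partial_u+b(u,v)\partial_v$. Since $\lambda$ vanishes identically on $\{v=0\}$ and $d\lambda(p)\neq 0$, we have $\lambda_u(u,0)\equiv 0$ and $\lambda_v(p)\neq 0$, so $\eta\lambda(p)=b(p)\lambda_v(p)$. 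Hence $\eta\lambda(p)\neq 0$ is exactly the first-kind condition $\det(\gamma',\eta)(0)\neq 0$; and when $b(p)=0$ a direct computation using $\lambda_u(u,0)\equiv 0$ yields $\eta\eta\lambda(p)=a(p)\,b_u(p)\,\lambda_v(p)$, so $\eta\eta\lambda(p)\neq 0$ is equivalent to $b_u(p)\neq 0$ (with $a(p)\neq 0$ automatic from $\tilde\eta(p)\neq 0$).

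For Part (1), under $b(p)\neq 0$ I would straighten $\tilde\eta$ to $\partial_v$, preserving $\{v=0\}$. The relation $df_p(\partial_v)=0$ yields $f_v(u,v)=vA(u,v)$ for some smooth $\R^3$-valued $A$, and Taylor expanding in $v$ gives
\[
f(u,v)=f(u,0)+\tfrac{1}{2}v^2A(u,0)+\tfrac{1}{6}v^3\partial_vA(u,0)+O(v^4).
\]
The non-degeneracy $\lambda_v(p)\neq 0$ translates into $\det(\gamma'(0),A(0,0),\nu(0))\neq 0$, and the front condition -- immersivity of $(f,\nu)$ -- then implies that $\{f_u(u,0),A(u,0),\partial_vA(u,0)\}$ is a smooth frame of $\R^3$ along $\gamma$ near $p$. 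A target diffeomorphism sending this frame to the standard basis, together with a source rescaling in $v$, reduces the $3$-jet of $f$ to $(u,v^2,v^3)$; $3$-$\mathcal{A}$-determinacy of the cuspidal edge then concludes.

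For Part (2), the conditions $b(p)=0$, $a(p)\neq 0$, $b_u(p)\neq 0$ describe a non-degenerate tangency of the flow of $\tilde\eta$ with $\gamma$ at $p$. The argument then follows the same lines as Part (1), now at order $4$: a suitable source change straightens the singular set and normalises the tangency of $\tilde\eta$, Taylor expansion in the transverse direction together with the front condition and the non-degeneracy $b_u(p)\neq 0$ produces a sufficient $4$-jet, and a target diffeomorphism matches it to the swallowtail model $(u,3v^4+uv^2,4v^3+2uv)$. Mather's $4$-$\mathcal{A}$-determinacy of the swallowtail finishes. The delicate step throughout is the $\mathcal{A}$-sufficiency -- ensuring that the target map built from $A$ and its higher derivatives is a genuine smooth diffeomorphism of $(\R^3,0)$; the front condition supplies the required transverse direction via $\nu$, and for the swallowtail $b_u(p)\neq 0$ supplies the unfolding direction along $\gamma$. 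A complete write-up would either invoke Mather's theorem directly or exhibit the coordinate changes explicitly, as in \cite{krsuy,suy3}.
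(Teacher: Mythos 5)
The paper gives no proof of this statement---it is quoted as a Fact from \cite{krsuy,suy3}---so your proposal has to be measured against the arguments in those references. Your preparatory reductions are correct: the identities $\eta\lambda(p)=b(p)\lambda_v(p)$ and, when $b(p)=0$, $\eta\eta\lambda(p)=a(p)b_u(p)\lambda_v(p)$ do hold, and the front condition does force $\langle \partial_vA,\nu\rangle\neq0$, so that $\{f_u,A,\partial_vA\}$ frames $\R^3$ along the singular curve. The fatal step is the appeal to finite $\mathcal{A}$-determinacy. Neither the cuspidal edge $(u,v)\mapsto(u,v^2,v^3)$ nor the swallowtail is finitely $\mathcal{A}$-determined as a map germ $(\R^2,0)\to(\R^3,0)$: by the Mather--Gaffney geometric criterion, finite $\mathcal{A}$-determinacy requires the germ to be stable off the origin, whereas every point of the singular curve of the model cuspidal edge (and every singular point of the swallowtail other than the origin itself) is again a cuspidal edge point, which is not a stable mono-germ of a map from the plane into $3$-space (the only stable ones are immersions and cross-caps). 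So ``$3$-$\mathcal{A}$-determinacy of the cuspidal edge'' and ``Mather's $4$-$\mathcal{A}$-determinacy of the swallowtail'' are false, and matching a finite jet of $f$ to the normal form proves nothing. This non-determinacy is exactly why Fact \ref{crit:non-deg} is a genuinely nontrivial theorem.

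What the cited proofs do instead is control \emph{all} orders in $v$ at once by exploiting the front structure globally: one passes to the Legendrian lift $L_f=(f,\nu)$ and a generating family, translates the conditions on $\lambda$ and $\eta$ into an $A_2$- (resp.\ $A_3$-) condition together with versality of that family, and invokes the Legendrian equivalence theory (equivalently, one performs a parametrized normalization of the family of cusped curves $v\mapsto f(u,v)$ using Malgrange division, rather than jet determinacy). To salvage your direct approach you would have to replace the determinacy step by such a parametrized normal-form argument. Two smaller points: the expansion should read $f(u,v)=f(u,0)+\tfrac12 v^2A(u,0)+\tfrac13 v^3\partial_vA(u,0)+O(v^4)$, since $f_{vvv}(u,0)=2\partial_vA(u,0)$; and the necessity halves of both equivalences---that the stated conditions are invariant under $\mathcal{A}$-equivalence and under the choices of $\lambda$, $\eta$, and that they hold for the model germs---are not addressed at all.
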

We note that criteria for other singularities of fronts and frontals are known 
(\cite{fsuy,is,ist,s}).

\subsection{Geometric invariants of fronts}\label{sec:3}
Let $f:\Sig\to\R^3$ be a front, $\nu$ its unit normal vector and $p\in\Sig$ a non-degenerate singular point of $f$. 
Then we can take the following local coordinate system $(U;u,v)$ centered at $p$.
\begin{dfn}
Let $f:\Sig\to\R^3$ be a front and $p$ a cuspidal edge (resp. non-degenerate singular point of the second kind). 
Then a local coordinate system $(U;u,v)$ centered at $p$ is called an {\it adapted} 
if the following conditions hold:
\begin{itemize}
\item the $u$-axis gives a singular curve,
\item $\eta=\partial_v$ (resp. $\eta=\partial_u+\epsilon(u)\partial_v$ with $\epsilon(0)=0$) 
gives a null vector field on the $u$-axis, and
\item there are no singular point other than the $u$-axis.
\end{itemize}
\end{dfn}

First we deal with cuspidal edges. 
Let $f:\Sig\to\R^3$ be a front, $\nu$ its unit normal vector and 
$p\in\Sig$ a cuspidal edge. 
Let $\kappa_s$, $\kappa_\nu$, $\kappa_c$, $\kappa_t$ and $\kappa_i$ denote the {\it singular curvature} (\cite{suy}), 
the {\it limiting normal curvature} (\cite{suy}), the {\it cuspidal curvature} (\cite{msuy}), 
the {\it cusp-directional torsion} (\cite{ms}) and the {\it edge inflectional curvature} (\cite{msuy}), respectively. 
{If we take an adapted coordinate system $(U;u,v)$ around $p$, then 
\begin{align}\label{kappas}
\begin{aligned}
\kappa_s&=\sgn(\lambda_v)\dfrac{\det(f_u,f_{uu},\nu)}{|f_u|^3},\quad
\kappa_\nu=\dfrac{\langle{f_{uu},\nu}\rangle}{|f_u|^2},\quad
\kappa_c=\dfrac{|f_u|^{3/2}\det(f_u,f_{vv},f_{vvv})}{|f_u\times f_{vv}|^{5/2}},\\
\kappa_t&=\dfrac{\det(f_u,f_{vv},f_{uvv})}{|f_u\times f_{vv}|^2}
-\dfrac{\det(f_u,f_{vv},f_{uu})\langle{f_u,f_{vv}}\rangle}{|f_u|^2|f_u\times f_{vv}|^2},\\
\kappa_i&=\dfrac{\det(f_u,f_{vv},f_{uuu})}{|f_u|^3|f_u\times f_{vv}|}-
3\dfrac{\langle{f_u,f_{vv}}\rangle\det(f_u,f_{vv},f_{uu})}{|f_u|^5|f_u\times f_{vv}|}
\end{aligned}
\end{align}
hold on the $u$-axis, where $|\cdot|$ denotes the standard norm of $\R^3$.} 
We note that $\kappa_c$ does not vanish along the singular curve 
if it consists of cuspidal edges (\cite[Proposition 3.11]{msuy}). 
Moreover, $\kappa_s$ is an intrinsic invariant of a cuspidal edge 
which relates to the convexity and concavity (\cite{hhnsuy,suy}). 
For other geometric meanings of these invariants, see \cite{hhnsuy,hnuy,suy,msuy,ms}. 

Take an adapted coordinate system $(U;u,v)$ centered at $p$. 
Since $df(\eta)=f_v=0$ on the $u$-axis, there exists a $C^\infty$ map 
$h:U\to\R^3\setminus\{0\}$ such that $f_v=vh$ on $U$. 
We note that $\{f_u,h,\nu\}$ gives a frame on $U$, 
and we may take $\nu$ as $\nu=(f_u\times h)/|f_u\times h|$ (cf. \cite{ms,msuy,nuy}). 
Under the adapted coordinate system $(U;u,v)$ centered at $p$ with $\lambda_v(u,0)=\det(f_u,h,\nu)(u,0)>0$, 
invariants $\kappa_s$, $\kappa_\nu$, $\kappa_c$ and $\kappa_t$ as in \eqref{kappas} 
can be written as follows (\cite[(2.17)]{hhnsuy} and \cite[Lemma 2.7]{t2}): 
\begin{align}\label{inv_cusp}
\begin{aligned}
\kappa_s&=\frac{2\wtil{F}_u\wtil{E}-\wtil{E}\wtil{E}_{vv}-\wtil{E}_u\wtil{F}}
{2\wtil{E}^{3/2}\sqrt{\wtil{E}\wtil{G}-\wtil{F}^2}},\quad
\kappa_\nu=\frac{\wtil{L}}{\wtil{E}},\\
\kappa_c&=2\wtil{N}\left(\dfrac{\wtil{E}}{\wtil{E}\wtil{G}-\wtil{F}^2}\right)^{3/4},\quad 
\kappa_t=\frac{\wtil{E}\wtil{M}-\wtil{F}\wtil{L}}{\wtil{E}\sqrt{\wtil{E}\wtil{G}-\wtil{F}^2}}
\end{aligned}
\end{align}
along the $u$-axis, where 
\begin{align}\label{fundamental}
\begin{aligned}
\wtil{E}&=|f_u|^2,& \wtil{F}&=\inner{f_u}{h},& \wtil{G}&=|h|^2,\\
\wtil{L}&=-\inner{f_u}{\nu_u},& \wtil{M}&=-\inner{h}{\nu_u},&\wtil{N}&=-\inner{h}{\nu_v}.
\end{aligned}
\end{align}
We consider the representation of $\kappa_i$ like as \eqref{inv_cusp}. 
We now introduce the following functions:
\begin{align}\label{christof1}
\wtil{\varGamma}{^{\,\,1}_{1\,1}}
&=\frac{\wtil{G}\wtil{E}_u-2\wtil{F}\wtil{F}_u+2\wtil{F}\inner{f_u}{h_u}}
{2(\wtil{E}\wtil{G}-\wtil{F}^2)}, & 
\wtil{\varGamma}{^{\,\,2}_{1\,1}}
&=\frac{2\wtil{E}\wtil{F}_u-2\wtil{E}\inner{f_u}{h_u}-\wtil{F}\wtil{E}_u}
{2(\wtil{E}\wtil{G}-\wtil{F}^2)}\notag \\
\wtil{\varGamma}{^{\,\,1}_{1\,2}}
&=\frac{2\wtil{G}\inner{f_u}{h_u}-\wtil{F}\wtil{G}_u}{2(\wtil{E}\wtil{G}-\wtil{F}^2)}, & 
\wtil{\varGamma}{^{\,\,2}_{1\,2}}
&=\frac{\wtil{E}\wtil{G}_u-2\wtil{F}\inner{f_u}{h_u}}{2(\wtil{E}\wtil{G}-\wtil{F}^2)}\\
\wtil{\varGamma}{^{\,\,1}_{2\,2}}
&=\frac{2\wtil{G}\wtil{F}_v-v\wtil{G}\wtil{G}_u-\wtil{F}\wtil{G}_v}{2(\wtil{E}\wtil{G}-\wtil{F}^2)}, & 
\wtil{\varGamma}{^{\,\,2}_{2\,2}}
&=\frac{\wtil{E}\wtil{G}_v-2\wtil{F}\wtil{F}_v+v\wtil{F}\wtil{G}_u}{2(\wtil{E}\wtil{G}-\wtil{F}^2)}\notag
\end{align}
where the functions $\wtil{E}, \wtil{F}$ and $\wtil{G}$ are defined in \eqref{fundamental} and 
$\wtil{E}_v=2v\inner{f_u}{h_u}$ holds. 
We call functions $\wtil{\varGamma}\genfrac{}{}{0pt}{2}{i}{j\,k}$ as in \eqref{christof1} {\it modified Christoffel symbols}.
\begin{lem}\label{seconddiff1}
Under the above notations, we have
\begin{align*}
f_{uu}&=\wtil{\varGamma}{^{\,\,1}_{1\,1}}f_u
+\wtil{\varGamma}{^{\,\,2}_{1\,1}}h+\wtil{L}\nu,\\
f_{uv}&=v\wtil{\varGamma}{^{\,\,1}_{1\,2}}f_u
+v\wtil{\varGamma}{^{\,\,2}_{1\,2}}h+v\wtil{M}\nu,\\
f_{vv}&=v\wtil{\varGamma}{^{\,\,1}_{2\,2}}f_u+
\left(1+v\wtil{\varGamma}{^{\,\,2}_{2\,2}}\right)h+v\wtil{N}\nu.
\end{align*}
\end{lem}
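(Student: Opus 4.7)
The plan is to expand each of $f_{uu}$, $f_{uv}$, $f_{vv}$ in the local frame $\{f_u,h,\nu\}$ and to identify the coefficients with the modified Christoffel symbols by taking inner products with the frame vectors. Since $\nu$ is chosen as $(f_u\times h)/|f_u\times h|$, both $f_u$ and $h$ are orthogonal to $\nu$, and the Gram matrix of $\{f_u,h\}$ is $\bigl(\begin{smallmatrix}\wtil{E}&\wtil{F}\\ \wtil{F}&\wtil{G}\end{smallmatrix}\bigr)$ with determinant $\wtil{E}\wtil{G}-\wtil{F}^2$. Inverting this matrix will extract the tangential coefficients.

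For $f_{uu}$, write $f_{uu}=a f_u+b h+c\nu$. Differentiating $\inner{f_u}{\nu}=0$ in $u$ gives $c=\inner{f_{uu}}{\nu}=-\inner{f_u}{\nu_u}=\wtil{L}$. Taking inner products with $f_u$ and $h$ produces the linear system
\begin{align*}
\tfrac12\wtil{E}_u&=a\wtil{E}+b\wtil{F},\\
\wtil{F}_u-\inner{f_u}{h_u}&=a\wtil{F}+b\wtil{G},
\end{align*}
where the second equation comes from differentiating $\inner{f_u}{h}=\wtil{F}$ in $u$. Solving by Cramer's rule yields exactly $a=\wtil{\varGamma}{^{\,\,1}_{1\,1}}$ and $b=\wtil{\varGamma}{^{\,\,2}_{1\,1}}$.

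For $f_{uv}$ and $f_{vv}$, the main trick is to differentiate the identity $f_v=vh$. This gives $f_{uv}=v h_u$ and $f_{vv}=h+v h_v$, so it suffices to expand $h_u$ and $h_v$ in the frame. For $h_u=\alpha f_u+\beta h+\gamma\nu$, the identity $\inner{h}{\nu}=0$ gives $\gamma=\wtil{M}$, and inner products with $f_u$, $h$ yield $\inner{h_u}{f_u}=\alpha\wtil{E}+\beta\wtil{F}$ and $\tfrac12\wtil{G}_u=\alpha\wtil{F}+\beta\wtil{G}$; solving gives $\alpha=\wtil{\varGamma}{^{\,\,1}_{1\,2}}$, $\beta=\wtil{\varGamma}{^{\,\,2}_{1\,2}}$, and multiplying by $v$ produces the stated expression for $f_{uv}$. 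For $h_v=\alpha' f_u+\beta' h+\gamma'\nu$, one similarly obtains $\gamma'=\wtil{N}$; the inner product $\inner{h_v}{f_u}$ is computed as $\wtil{F}_v-\inner{h}{f_{uv}}=\wtil{F}_v-v\inner{h}{h_u}=\wtil{F}_v-\tfrac{v}{2}\wtil{G}_u$, while $\inner{h_v}{h}=\tfrac12\wtil{G}_v$. Solving the resulting $2\times2$ system gives $\alpha'=\wtil{\varGamma}{^{\,\,1}_{2\,2}}$ and $\beta'=\wtil{\varGamma}{^{\,\,2}_{2\,2}}$, so $f_{vv}=h+v(\alpha'f_u+\beta'h+\wtil{N}\nu)$ takes the stated form.

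There is no serious obstacle: the only delicate point is remembering to use $f_v=vh$ (rather than $f_v$ itself) when computing the mixed and pure second $v$-derivatives, so that the factors of $v$ in front of the tangential components appear correctly. Everything else is routine frame bookkeeping identical in spirit to the classical derivation of Christoffel symbols, but with $h$ playing the role of $f_v$.
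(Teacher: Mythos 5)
Your proof is correct and follows essentially the same route as the paper: expand the second derivatives in the frame $\{f_u,h,\nu\}$, read off the normal component from $\inner{\cdot}{\nu}$, and solve the $2\times 2$ Gram system for the tangential coefficients. The only (immaterial) difference is that you expand $h_u$ and $h_v$ and then multiply by $v$, whereas the paper expands $f_{uv}=vh_u$ and $f_{vv}=h+vh_v$ directly; the resulting linear systems are identical.
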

\begin{proof}
We now set the following: 
\begin{align*}
f_{uu}&=X_1 f_u+X_2 h+X_3\nu,\\
f_{uv}(=vh_u)&=Y_1 f_u+Y_2 h+Y_3\nu,\\
f_{vv}(=h+vh_v)&=Z_1 f_u+Z_2 h+Z_3\nu,
\end{align*}
where $X_i,Y_i,Z_i:U\to\R$ $(i=1,2,3)$ are $C^\infty$ functions. 

First we consider $f_{uu}$. 
By the definition of $\wtil{L}$ and $\inner{f_u}{\nu}=0$, we have $X_3=\wtil{L}$. 
Let us determine the functions $X_1$ and $X_2$. 
By direct calculations, we have
\begin{equation*}
\inner{f_{uu}}{f_u}=\wtil{E}X_1+\wtil{F}X_2,\quad 
\inner{f_{uu}}{h}=\wtil{F}X_1+\wtil{G}X_2.
\end{equation*}
Differentiating $\wtil{E}=\inner{f_u}{f_u}$ by $u$, we obtain $\inner{f_{uu}}{f_u}=\wtil{E}_u/2$. 
Moreover, since $\wtil{F}=\inner{f_u}{h}$, we have 
$\inner{f_{uu}}{h}=\wtil{F}_u-\inner{f_u}{h_u}$. 
Thus the above equations can be rewritten as 
\begin{equation*}
\begin{pmatrix} \dfrac{\wtil{E}_u}{2} \\ \wtil{F}_u-\inner{f_u}{h_u} \end{pmatrix}=
\begin{pmatrix} \wtil{E} & \wtil{F} \\ \wtil{F} & \wtil{G} \end{pmatrix} 
\begin{pmatrix} X_1 \\ X_2 \end{pmatrix}.
\end{equation*}
Since $\wtil{E}\wtil{G}-\wtil{F}^2>0$, one can solve this equation and get 
$X_i=\wtil{\varGamma}\genfrac{}{}{0pt}{2}{i}{1\,1}$ $(i=1,2)$. 

Next we consider $f_{uv}$. 
It follows that 
$\inner{f_{uv}}{\nu}=v\inner{h_u}{\nu}=-v\inner{h}{\nu_u}=v\wtil{M}=Y_3$ 
since $\inner{h}{\nu}=0$. 
For $Y_1$ and $Y_2$, by the similar computations as above, we get the following equation:
\begin{equation*}
v\begin{pmatrix} \inner{f_u}{h_u} \\ \dfrac{\wtil{G}_u}{2} \end{pmatrix}=
\begin{pmatrix} \wtil{E} & \wtil{F} \\ \wtil{F} & \wtil{G} \end{pmatrix} 
\begin{pmatrix} Y_1 \\ Y_2 \end{pmatrix}.
\end{equation*}
Therefore we have $Y_i=v\wtil{\varGamma}\genfrac{}{}{0pt}{2}{i}{1\,2}$ $(i=1,2)$. 

Finally we show the case of $f_{vv}$. 
$f_{vv}$ can be written as $f_{vv}=h+vh_v$ since $f_v=vh$. 
Thus the inner product of $f_{vv}$ and $\nu$ is calculated as 
$\inner{f_{vv}}{\nu}=v\inner{h_v}{\nu}=-v\inner{h}{\nu_v}=v\wtil{N}=Y_3$ 
since $\inner{h}{\nu}=0$ and $\inner{\nu}{\nu}=1$. 
For $Z_i$ $(i=1,2)$, we have the following equation
\begin{equation*}
\begin{pmatrix} \wtil{F}+v\left(\wtil{F}_v-\dfrac{v\wtil{G}_u}{2}\right) \\ 
\wtil{G}+\dfrac{v\wtil{G}_v}{2} \end{pmatrix}=
\begin{pmatrix} \wtil{E} & \wtil{F} \\ \wtil{F} & \wtil{G} \end{pmatrix} 
\begin{pmatrix} Z_1 \\ Z_2 \end{pmatrix}
\end{equation*}
by the similar calculations as above. 
Solving this equation, we have $Z_1=v\wtil{\varGamma}^{\,1}_{2\,2}$ 
and $Z_2=1+v\wtil{\varGamma}^{\,\,2}_{2\,2}$. 
\end{proof}
Using Lemma \ref{seconddiff1}, we formulate the edge inflectional curvature 
$\kappa_i$ in our setting.
\begin{lem}\label{cuspidalcurv}
Under the above settings with $\eta\lambda(u,0)>0$, $\kappa_i$ can be expressed as 
\begin{equation}\label{ki}
\kappa_i=
\frac{(\wtil{E}\wtil{M}-\wtil{F}\wtil{L})(2\wtil{F}_u\wtil{E}-\wtil{E}\wtil{E}_{vv}-\wtil{E}_u\wtil{F})}
{2\wtil{E}^{5/2}(\wtil{E}\wtil{G}-\wtil{F}^2)}
+\frac{\wtil{E}\wtil{L}_u-\wtil{E}_u\wtil{L}}{\wtil{E}^{5/2}}
\end{equation}
along the $u$-axis. 
\end{lem}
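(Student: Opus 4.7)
The plan is to substitute Lemma~\ref{seconddiff1} into the defining formula for $\kappa_i$ in \eqref{kappas} and then rewrite everything in terms of $\wtil{E},\wtil{F},\wtil{G},\wtil{L},\wtil{M}$ by invoking the modified Christoffel symbols \eqref{christof1}.

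First I would reduce the geometric quantities appearing in $\kappa_i$ along the $u$-axis. Since $f_v=vh$, the $v=0$ case of Lemma~\ref{seconddiff1} gives $f_{vv}|_{u\text{-axis}}=h$, which immediately yields $|f_u|^2=\wtil{E}$, $|f_u\times f_{vv}|^2=\wtil{E}\wtil{G}-\wtil{F}^2$, and $\inner{f_u}{f_{vv}}=\wtil{F}$. The key identity is that $f_u\times h$ is parallel to $\nu$ and, under the assumption $\eta\lambda(u,0)>0$, equals $\sqrt{\wtil{E}\wtil{G}-\wtil{F}^2}\,\nu$; hence for any vector $X$,
\[
\det(f_u,f_{vv},X)=\sqrt{\wtil{E}\wtil{G}-\wtil{F}^2}\,\inner{X}{\nu}.
\]

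Next I would compute the $\nu$-components of $f_{uu}$ and $f_{uuu}$. For $f_{uu}$, Lemma~\ref{seconddiff1} directly gives $\inner{f_{uu}}{\nu}=\wtil{L}$. For $f_{uuu}$, I differentiate $f_{uu}=\wtil{\varGamma}^{\,1}_{1\,1}f_u+\wtil{\varGamma}^{\,2}_{1\,1}h+\wtil{L}\nu$ with respect to $u$ and take the inner product with $\nu$. Using $\inner{\nu_u}{\nu}=0$ (since $|\nu|=1$) and $\inner{h_u}{\nu}=\wtil{M}$ (obtained from differentiating $\inner{h}{\nu}=0$), I obtain
\[
\inner{f_{uuu}}{\nu}=\wtil{\varGamma}^{\,1}_{1\,1}\wtil{L}+\wtil{\varGamma}^{\,2}_{1\,1}\wtil{M}+\wtil{L}_u.
\]
Substituting these into \eqref{kappas}, the $\sqrt{\wtil{E}\wtil{G}-\wtil{F}^2}$ factors cancel and $\kappa_i$ reduces to
\[
\kappa_i=\frac{\wtil{\varGamma}^{\,1}_{1\,1}\wtil{L}+\wtil{\varGamma}^{\,2}_{1\,1}\wtil{M}+\wtil{L}_u}{\wtil{E}^{3/2}}-\frac{3\wtil{F}\wtil{L}}{\wtil{E}^{5/2}}.
\]

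Finally I would insert the explicit modified Christoffel symbols. Using $\wtil{E}_v=2v\inner{f_u}{h_u}$, on the $u$-axis we have $\inner{f_u}{h_u}=\wtil{E}_{vv}/2$, so from \eqref{christof1} one reads off $\wtil{\varGamma}^{\,2}_{1\,1}=P/(2D)$ with $P=2\wtil{F}_u\wtil{E}-\wtil{E}\wtil{E}_{vv}-\wtil{E}_u\wtil{F}$ and $D=\wtil{E}\wtil{G}-\wtil{F}^2$. The main algebraic trick is to rewrite $\wtil{E}\wtil{\varGamma}^{\,1}_{1\,1}$ via the identity $\wtil{E}\wtil{G}=D+\wtil{F}^2$, which yields $\wtil{E}\wtil{\varGamma}^{\,1}_{1\,1}=\wtil{E}_u/2-\wtil{F}P/(2D)$. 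The $P/(2D)$-contributions from $\wtil{\varGamma}^{\,2}_{1\,1}\wtil{M}$ and $\wtil{\varGamma}^{\,1}_{1\,1}\wtil{L}$ combine into $(\wtil{E}\wtil{M}-\wtil{F}\wtil{L})P/(2\wtil{E}^{5/2}D)$, giving the first summand of \eqref{ki}, while the surviving pieces reassemble into the second summand.

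The hardest part is the final algebraic bookkeeping in this combination step: keeping careful track of all $\wtil{F}\wtil{L}$, $\wtil{E}_u\wtil{L}$ and $\wtil{E}\wtil{L}_u$ contributions, and organizing them via the substitution $\wtil{E}\wtil{G}=D+\wtil{F}^2$ so that the factor $\wtil{E}\wtil{M}-\wtil{F}\wtil{L}$ comes out cleanly in front of $P/(2\wtil{E}^{5/2}D)$.
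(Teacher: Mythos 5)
Your overall route is the same as the paper's: reduce everything along the $u$-axis using $f_{vv}=h$, read off the $\nu$-component of $f_{uuu}$ from Lemma \ref{seconddiff1} (your computation $\inner{f_{uuu}}{\nu}=\wtil{\varGamma}{^{\,\,1}_{1\,1}}\wtil{L}+\wtil{\varGamma}{^{\,\,2}_{1\,1}}\wtil{M}+\wtil{L}_u$ is correct), and then do the Christoffel-symbol algebra; with your notation $P=2\wtil{F}_u\wtil{E}-\wtil{E}\wtil{E}_{vv}-\wtil{E}_u\wtil{F}$ and $D=\wtil{E}\wtil{G}-\wtil{F}^2$, your identity $\wtil{E}\wtil{\varGamma}{^{\,\,1}_{1\,1}}=\wtil{E}_u/2-\wtil{F}P/(2D)$ is also correct. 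The failure is in the correction term and the final ``reassembly''. You evaluate $\inner{f_u}{f_{vv}}=\wtil{F}$ and get $-3\wtil{F}\wtil{L}/\wtil{E}^{5/2}$ as the second contribution, so after the $P$-terms are collected the surviving pieces are
\[
\frac{\wtil{L}\wtil{E}_u}{2\wtil{E}^{5/2}}+\frac{\wtil{L}_u}{\wtil{E}^{3/2}}-\frac{3\wtil{F}\wtil{L}}{\wtil{E}^{5/2}},
\]
which equals the claimed second summand $(\wtil{E}\wtil{L}_u-\wtil{E}_u\wtil{L})/\wtil{E}^{5/2}$ only when $\wtil{L}(\wtil{E}_u-2\wtil{F})=0$; in general the two differ by $3\wtil{L}(\wtil{E}_u-2\wtil{F})/(2\wtil{E}^{5/2})$. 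So the last step of your argument does not go through, and what you have actually derived is a formula different from \eqref{ki}.

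The source of the mismatch is the inner product in the defining expression for $\kappa_i$. The paper's own proof uses
\[
-3\,\frac{\det(f_u,f_{vv},f_{uu})\inner{f_u}{f_{uu}}}{|f_u|^5\,|f_u\times f_{vv}|}=-\frac{3\wtil{L}\wtil{E}_u}{2\wtil{E}^{5/2}},
\]
i.e.\ the factor $\inner{f_u}{f_{uu}}=\wtil{E}_u/2$ rather than $\inner{f_u}{f_{vv}}=\wtil{F}$; then $\wtil{L}\wtil{E}_u/(2\wtil{E}^{5/2})-3\wtil{L}\wtil{E}_u/(2\wtil{E}^{5/2})=-\wtil{E}_u\wtil{L}/\wtil{E}^{5/2}$ produces exactly the second summand of \eqref{ki}. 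That version is the one consistent with \eqref{eq:kikskt} and with \cite[Theorem 4.4]{ms} (the term is the arc-length correction arising from differentiating $|f_u|^{-3}$), so the display \eqref{kappas} evidently carries a typo that you inherited. Your proof is salvageable essentially verbatim, but only after replacing $\inner{f_u}{f_{vv}}$ by $\inner{f_u}{f_{uu}}$ in the formula for $\kappa_i$; as written it does not prove the stated lemma.
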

\begin{proof}
We take an adapted coordinate system $(U;u,v)$ around a cuspidal edge $p$ 
with $\eta\lambda(u,0)=\det(f_u,h,\nu)(u,0)>0$. 
By Lemma \ref{seconddiff1}, $f_{uuu}$ is given by 
\begin{equation*}
f_{uuu}=\ast_1 f_u+\ast_2 h
+\left(\wtil{\varGamma}{^{\,\,1}_{1\,1}}\wtil{L}+\wtil{\varGamma}{^{\,\,2}_{1\,1}}\wtil{M}+\wtil{L}_u\right)\nu,
\end{equation*}
where $\ast_i$ $(i=1,2)$ are some functions. 
Thus it follows that 
\begin{equation*}
\det(f_u,f_{vv},f_{uuu})
=\left(\wtil{\varGamma}{^{\,\,1}_{1\,1}}\wtil{L}+\wtil{\varGamma}{^{\,\,2}_{1\,1}}\wtil{M}+\wtil{L}_u\right)
\det(f_u,h,\nu)
\end{equation*}
along the $u$-axis. 
Moreover, we have 
\begin{equation*}
\frac{\det(f_u,f_{vv},f_{uu})\inner{f_u}{f_{uu}}}{|f_u\times f_{vv}|}=\frac{\wtil{L}\wtil{E}_u}{2}.
\end{equation*}
Therefore we obtain 
\begin{align*}
\kappa_i(u)&=\frac{\det(f_u,f_{vv},f_{uuu})}{|f_u|^3|f_u\times f_{vv}|}(u,0)
-3\frac{\det(f_u,f_{vv},f_{uu})\inner{f_u}{f_{uu}}}{|f_u|^5|f_u\times f_{vv}|}(u,0)\\
&=\left(\frac{\wtil{\varGamma}{^{\,\,1}_{1\,1}}\wtil{L}+\wtil{\varGamma}{^{\,\,2}_{1\,1}}\wtil{M}+\wtil{L}_u}
{\wtil{E}^{3/2}}-\frac{3\wtil{L}\wtil{E}_u}{2\wtil{E}^{5/2}}\right)(u,0)\\
&=\left(
\frac{(\wtil{E}\wtil{M}-\wtil{F}\wtil{L})(2\wtil{F}_u\wtil{E}-\wtil{E}\wtil{E}_{vv}-\wtil{E}_u\wtil{F})}
{2\wtil{E}^{5/2}(\wtil{E}\wtil{G}-\wtil{F}^2)}
+\frac{\wtil{E}\wtil{L}_u-\wtil{E}_u\wtil{L}}{\wtil{E}^{5/2}}\right)(u,0),
\end{align*}
where we used the relation $\wtil{E}_{vv}=2\inner{f_u}{h_u}$ along the $u$-axis and the expressions of 
$\wtil{\varGamma}{^{\,\,i}_{1\,1}}$ $(i=1,2)$ as in \eqref{christof1}.
\end{proof}
By \eqref{inv_cusp} and \eqref{ki}, we see that 
\begin{equation}\label{eq:kikskt}
\kappa_i=\kappa_t\kappa_s+\dfrac{\kappa_\nu'}{\sqrt{\wtil{E}}}
\end{equation}
along the $u$-axis. 
In particular, if $\kappa_\nu'(p)=0$, then we have $\kappa_i(p)=\kappa_t(p)\kappa_s(p)$.

Next, we consider the non-degenerate singular point of the second kind. 
Let $f:\Sig\to\R^3$ be a front, $\nu$ its unit normal vector 
and $p$ a non-degenerate singular point of the second kind. 
Take an adapted coordinate system $(U;u,v)$ around $p$, 
and denote by $H$ the mean curvature of $f$ defined on $U\setminus\{v=0\}$. 
Then the {\it normalized cuspidal curvature} $\mu_c(p)$ is defined by $\mu_c(p)=2\hat{H}(p)$, 
where $\hat{H}=vH$ (cf. \cite{msuy}). 
It is known that $f$ is a front at $p$ if and only if $\mu_c(p)\neq0$ holds (\cite[Proposition 4.2]{msuy}).

\subsection{Principal curvatures of fronts}
We recall behavior of principal curvature of fronts near non-degenerate singular points.
Let $f:\Sig\to\R^3$ be a front, $\nu$ a unit normal vector of $f$ 
and $p\in\Sig$ a non-degenerate singular point of $f$. 
Then we take an adapted coordinate system $(U;u,v)$ around $p$. 
Under this coordinate system, there exists a $C^\infty$ map 
$h:U\to\R^3\setminus\{0\}$ such that $df(\eta)=vh$ and $\langle{h,\nu}\rangle=0$. 
If $p$ is a cuspidal edge, then $df(\eta)=f_v=vh$ and the pair $\{f_u,h,\nu\}$ gives a frame. 
On the other hand, if $p$ is of the second kind, 
$f_u=vh-\epsilon(u) f_v$ and $\{h,f_v,\nu\}$ gives a frame 
since $df(\eta)=f_u+\epsilon(u)f_v=vh$ holds. 

Let $K$ and $H$ be the Gaussian and the mean curvature of $f$ defined on $U\setminus\{v=0\}$, 
namely, the set of regular points of $f$ on $U$. 
Then these functions are bounded $C^\infty$ functions. 
Using these functions, we define two functions $\kappa_j$ $(j=1,2)$ as 
$$\kappa_1=H+\sqrt{H^2-K},\quad \kappa_2=H-\sqrt{H^2-K}$$
on $U\setminus\{v=0\}$. 
These functions are dealt with {\it principal curvatures} of $f$. 
It is known that one of $\kappa_j$ (\/$j=1,2$\/) can be extended as a bounded $C^\infty$ function on $U$ 
(\cite[Theorem 3.1]{t2}, see also \cite{mu}). 
Moreover, one can take a {\it principal vector $\bm{V}$ 
with respect to bounded principal curvature} (\cite[(3.2) and (3.3)]{t2}).

We write $\kappa$ as the bounded principal curvature of $f$ on $U$ 
and $\tilde{\kappa}$ as the unbounded one. 
We remark that $\kappa(p)=\kappa_\nu(p)$ holds. 
On the other hand, for a unbounded principal curvature $\tilde{\kappa}$, we have the following.
\begin{prop}\label{prop:change}
Under the above setting, the unbounded principal curvature $\tilde{\kappa}$ 
changes the sign across the singular curve.
\end{prop}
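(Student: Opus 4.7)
The plan is to use the trace identity $\kappa+\til\kappa = 2H$, rewrite it as $v\til\kappa = 2vH - v\kappa$, and show that this product extends to a function on a neighborhood of $p$ whose value at $p$ is nonzero. Once this is established, $\til\kappa$ behaves like a nonzero quantity divided by $v$, and so changes sign across the singular curve $\{v=0\}$.

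Work in an adapted coordinate system $(U;u,v)$ around $p$. The bounded principal curvature $\kappa$ is $C^\infty$ on $U$ by \cite[Theorem 3.1]{t2}, so $v\kappa$ is $C^\infty$ and vanishes on $\{v=0\}$. For the term $vH$, I would compute the first and second fundamental forms in the natural frame of the adapted coordinates. For a cuspidal edge, using $\{f_u,h,\nu\}$ with $f_v=vh$, one finds $EG-F^2 = v^2(\wtil E\wtil G-\wtil F^2)$ and $EN-2FM+GL = v(\wtil E\wtil N+v(\wtil G\wtil L-2\wtil F\wtil M))$, hence $vH$ extends smoothly to $U$ and takes the value $\wtil E\wtil N/(\wtil E\wtil G-\wtil F^2)$ on $\{v=0\}$, which is nonzero at $p$ since $\kappa_c(p)\neq0$ on a cuspidal edge by \cite[Proposition 3.11]{msuy} forces $\wtil N(p)\neq0$. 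For a non-degenerate singular point of the second kind, an analogous computation using the frame $\{h,f_v,\nu\}$ together with $f_u=vh-\epsilon(u)f_v$ shows that $vH$ again extends smoothly, with value at $p$ equal to $\hat H(p)=\mu_c(p)/2$; this is nonzero precisely because $f$ is a front at $p$, by \cite[Proposition 4.2]{msuy}.

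Combining these, $v\til\kappa$ extends continuously to $U$ with $(v\til\kappa)(p)\neq0$, so $v\til\kappa$ has constant sign on some neighborhood of $p$. Therefore $\til\kappa=(v\til\kappa)/v$ has opposite signs on $\{v>0\}$ and $\{v<0\}$ near $p$, which is exactly the claim. The main obstacle is the separate analysis of the two types of non-degenerate singular points, since the relevant frames differ; the unifying phenomenon is that in both cases the $1/v$ blow-up of $H$ is controlled by a nonvanishing cuspidal-type invariant of the front, while $\kappa$ remains bounded and therefore contributes nothing to the leading order of $v\til\kappa$ at the singular curve.
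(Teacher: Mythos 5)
Your argument is correct and follows essentially the same strategy as the paper: the paper sets $\hat{\kappa}=\lambda\tilde{\kappa}$ and quotes \cite[Remark 3.2]{t2} to conclude that $\hat{\kappa}$ is a nonzero multiple of $\kappa_c$ (resp.\ $\mu_c$) at a cuspidal edge (resp.\ a second-kind point), then uses the sign change of $\lambda$ across $S(f)$; you regularize by $v$ instead of $\lambda$ (the same up to a nonvanishing factor in adapted coordinates) and establish the nonvanishing of the limit directly from the trace identity $v\tilde{\kappa}=2vH-v\kappa$ rather than by citation. Your only slip is that the value of $vH$ on $\{v=0\}$ in the cuspidal-edge case should be $\wtil{E}\wtil{N}/\bigl(2(\wtil{E}\wtil{G}-\wtil{F}^2)\bigr)$, a missing factor of $2$ that does not affect the conclusion.
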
 
\begin{proof}
We set $\hat{\kappa}=\lambda\tilde{\kappa}$, where $\lambda=\det(f_u,f_v,\nu)$. 
Then by \cite[Remark 3.2]{t2}, if $p$ is a cuspidal edge of $f$, 
$\hat{\kappa}$ is proportional to a nonzero functional multiple of 
the {cuspidal curvature} $\kappa_c$ along the singular curve. 
On the other hand, if $p$ is of the second kind, $\hat{\kappa}(p)$ is proportional to a nonzero real multiple of 
the {normalized cuspidal curvature} $\mu_c(p)(\neq0)$. 
Thus $\hat{\kappa}(p)\neq0$ holds, 
and hence we may assume that $\hat{\kappa}>0$ near $p$. 
The function $\lambda$ changes the sign across the singular curve. 
Therefore we have the assertion.
\end{proof}
\begin{dfn}
Under the above settings, 
a point $p$ is a {\it ridge point} if $\bm{V}\kappa(p)=0$ holds, 
where $\bm{V}\kappa$ means directional derivative of $\kappa$ in the direction $\bm{V}$. 
Moreover, $p$ is a {\it $k$-th order ridge point} if $\bm{V}^{(m)}\kappa(p)=0$ $(1\leq m\leq k)$ 
and $\bm{V}^{(k+1)}\kappa(p)\neq0$ holds, 
where $\bm{V}^{(m)}\kappa$ means $m$-th order directional derivative of $\kappa$ in the direction $\bm{V}$.
\end{dfn}
Porteous introduced the concepts of ridge points on regular surfaces. 
He showed that a ridge point on a surface corresponds to an $A_3$ singularity of the distance squared functions on it.
For more other characterizations of ridge points on surfaces, 
see \cite{bgm,bgt,bt,fh,po1,po}.

\section{Gauss maps of fronts}
\subsection{Singularities of maps from the plane into the plane}
We recall singularities of a map $f:\R^2\to\R^2$ to consider singularities of Gauss maps of fronts. 
Whitney showed that generic singularities of these mappings are a {\it fold} and a {\it cusp}, 
which are $\mathcal{A}$-equivalent to the germs $(u,v)\mapsto(u,v^2)$ and $(u,v)\mapsto(u,v^3+uv)$ 
at the origin, respectively (see Figure \ref{fig:sing1}). 
\begin{figure}[htbp]
  \begin{center}
    \begin{tabular}{c}

      \begin{minipage}{0.33\hsize}
        \begin{center}
          \includegraphics[clip, width=3cm]{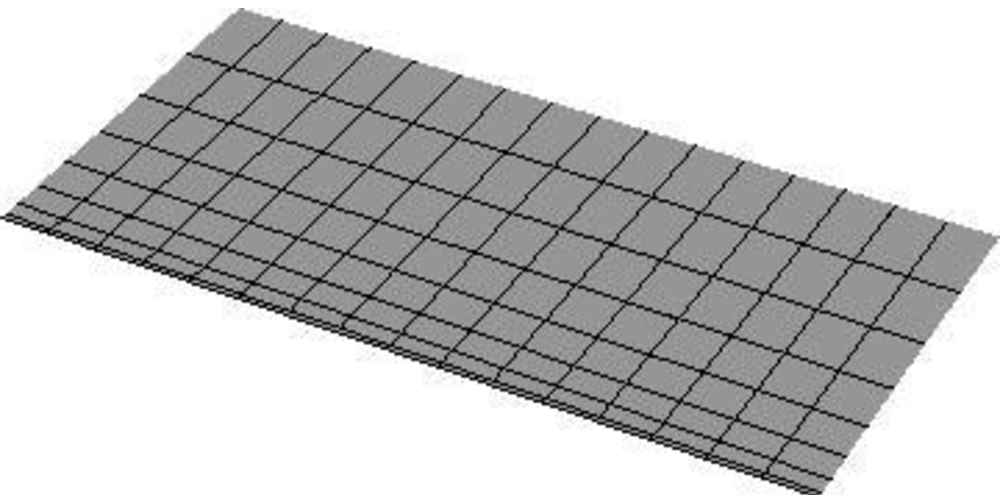}
          \hspace{1.6cm} fold
        \end{center}
      \end{minipage}

      \begin{minipage}{0.33\hsize}
        \begin{center}
          \includegraphics[clip, width=3cm]{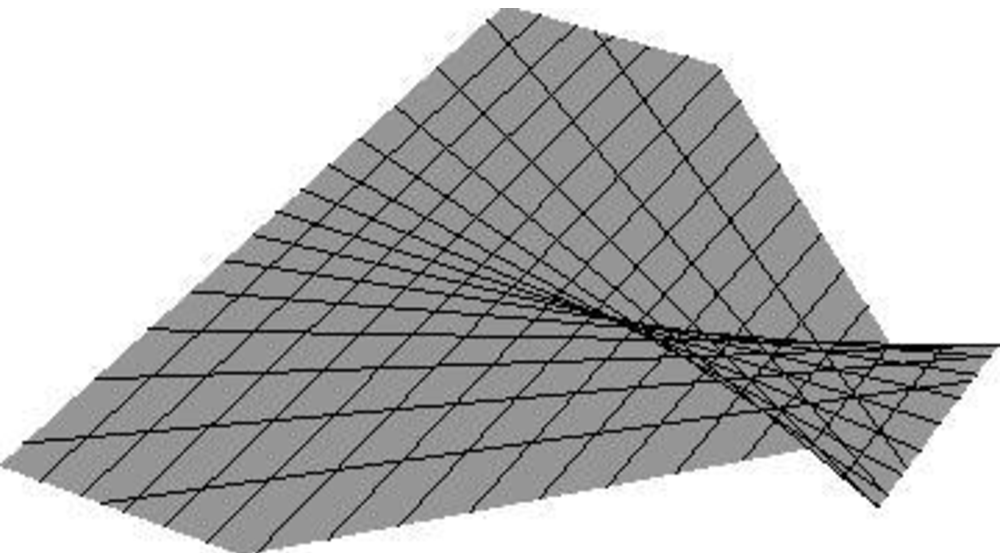}
          \hspace{1.6cm} cusp
        \end{center}
      \end{minipage}

    \end{tabular}
    \caption{Fold and cusp.}
    \label{fig:sing1}
  \end{center}
\end{figure}
Moreover, Rieger \cite{r} classified singularities of $C^\infty$ map germs 
from a plane into a plane with corank $1$ and 
$\mathcal{A}_e$-codimension $\leq6$. 
Singularities called {\it lips}, {\it beaks} and {\it swallowtail} are the map germs $\mathcal{A}$-equivalent to 
$(u,v)\mapsto(u,v^3+u^2v)$, $(u,v^3-u^2v)$ and $(u,v^4+ uv)$ at the origin, respectively (see Figure \ref{fig:sing2}). 
These singularities are $\mathcal{A}_e$-codimension $1$. 
\begin{figure}[htbp]
  \begin{center}
    \begin{tabular}{c}

      \begin{minipage}{0.3\hsize}
        \begin{center}
          \includegraphics[clip, width=3cm]{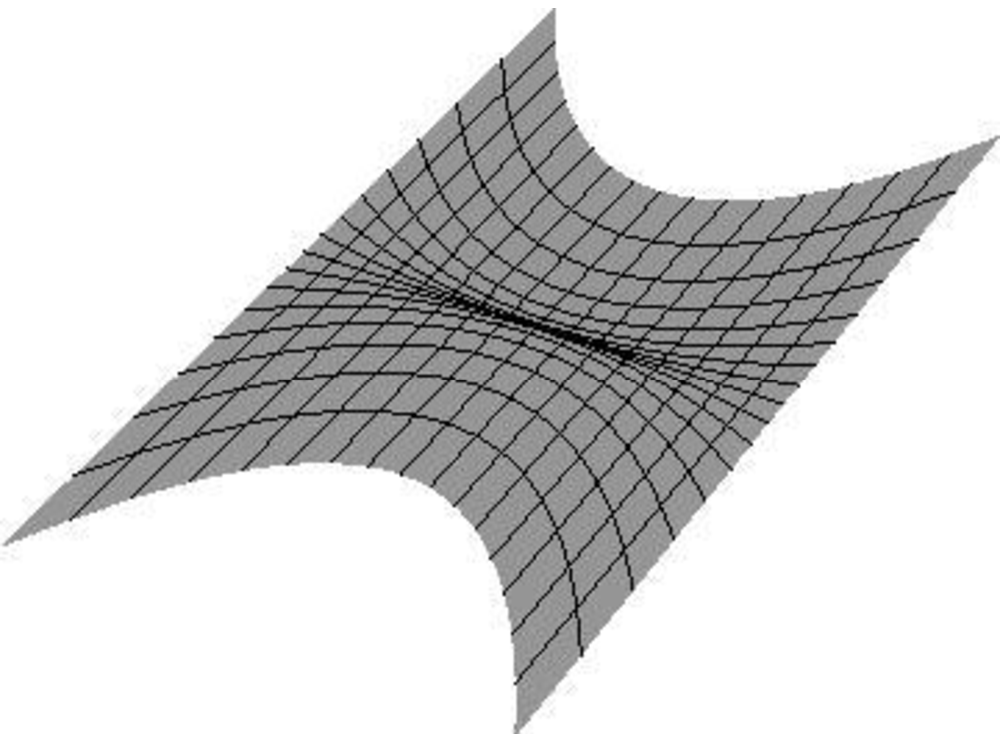}
          \hspace{1.6cm} lips
        \end{center}
      \end{minipage}

      \begin{minipage}{0.3\hsize}
        \begin{center}
          \includegraphics[clip, width=3cm]{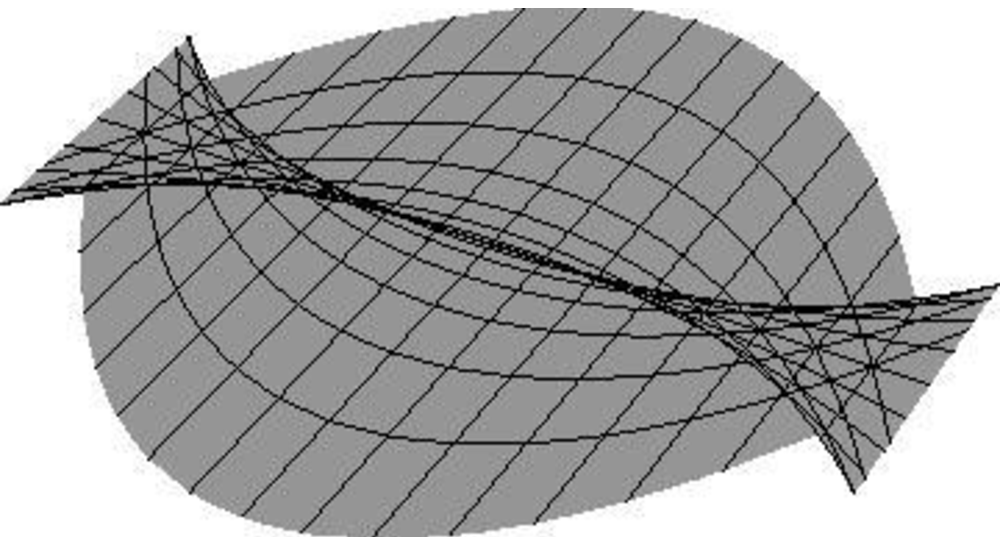}
          \hspace{1.6cm} beaks
        \end{center}
      \end{minipage}

      \begin{minipage}{0.3\hsize}
        \begin{center}
          \includegraphics[clip, width=3cm]{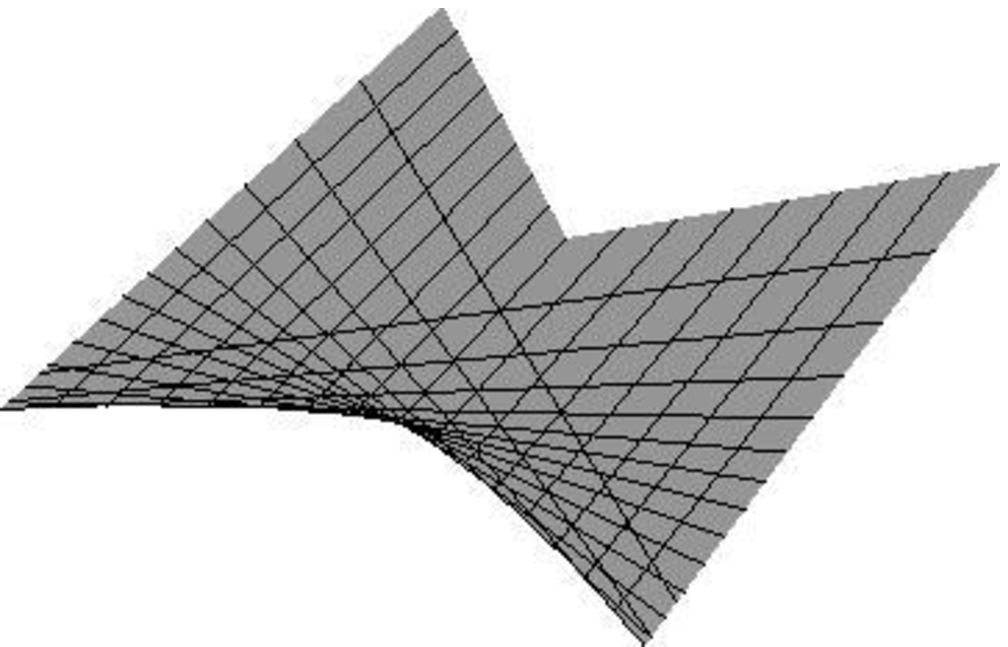}
          \hspace{1.6cm} swallowtail
        \end{center}
      \end{minipage}

    \end{tabular}
    \caption{Lips, beaks and swallowtail.}
    \label{fig:sing2}
  \end{center}
\end{figure}

Let $f:\R^2\to\R^2$ be a $C^\infty$ map. 
Then we set a function $\Lambda:\R^2\to\R$ by 
$$\Lambda(u,v)=\det(f_u,f_v)(u,v)$$
for some local coordinates $(u,v)$. 
We call $\Lambda$ the {\it identifier of singularity} of $f$. 
By the definition of $\Lambda$, we see that the set of singular points $S(f)$ of $f$ 
coincides with $\Lambda^{-1}(0)$.  
Take a corank $1$ singular point $p$ of $f$. 
Then there exist a neighborhood $V$ of $p$ and a never vanishing vector field $\eta\in\mathfrak{X}(V)$ 
such that $df_q(\eta_q)=0$ holds for any $q\in S(f)\cap V$. 
We call $\eta$ the {\it null vector field}. 
A singular point $p$ is called a {\it non-degenerate singular point} if $d\Lambda(p)\neq0$. 

\begin{fact}[\cite{s1,w}]\label{singular_plane}
Let $f:\R^2\to\R^2$ be a $C^\infty$ map and $p\in\R^2$ a singular point of $f$. 
Then 
\begin{enumerate}
\item $f$ at $p$ is a fold if and only if $\eta\Lambda(p)\neq0$.
\item $f$ at $p$ is a cusp if and only if $d\Lambda(p)\neq0$, $\eta\Lambda(p)=0$ 
and $\eta\eta\Lambda(p)\neq0$.
\item $f$ at $p$ is a swallowtail if and only if $d\Lambda(p)\neq0$, $\eta\Lambda(p)=\eta\eta\Lambda(p)=0$ 
and $\eta\eta\eta\Lambda(p)\neq0$.
\item $f$ at $p$ is a lips if and only if $d\Lambda(p)=0$ and $\det\hess(\Lambda(p))>0$.
\item $f$ at $p$ is a beaks if and only if $d\Lambda(p)=0$, $\det\hess(\Lambda(p))<0$ 
and $\eta\eta\Lambda(p)\neq0$.
\end{enumerate}
\end{fact}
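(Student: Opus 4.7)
The plan is to reduce to a convenient normal form for the source and then recognize each singularity type by a jet computation on a single function. Since $p$ is a corank $1$ critical point, after an affine change in the target we may assume $df_p$ has image spanned by the first coordinate direction, and then the submersion theorem supplies source coordinates in which
$$f(u,v)=(u,g(u,v))$$
for some smooth $g$ with $g_v(p)=0$. In these coordinates $\Lambda=g_v$, the null vector field may be taken as $\eta=\partial_v$, and hence $\eta^{(k)}\Lambda(p)=\partial_v^{k+1}g(p)$ while $\hess\Lambda(p)=\hess(g_v)(p)$. Each of the five conditions thereby becomes a condition on the Taylor coefficients of $g$ at $p$. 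Conversely, $\eta^{(k)}\Lambda(p)$ and $\det\hess\Lambda(p)$ transform by nonvanishing factors under $\mathcal{A}$-equivalence (once $\eta$ and $\Lambda$ are fixed modulo nowhere-zero functions), so the vanishing pattern is a well-defined invariant of the $\mathcal{A}$-orbit, and it suffices to exhibit a smooth equivalence to a standard model in one direction.

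For the non-degenerate cases (1)--(3), the stated vanishing of $\partial_v^{j+1}g(p)$ for $j\leq k-1$ and nonvanishing at $j=k$ (with $k=1,2,3$ respectively), together with $d\Lambda(p)\neq 0$, forces $g_{uv}(p)\neq 0$ as soon as $k\geq 2$. This is exactly what is required so that the Malgrange preparation theorem, applied to $g$ viewed as an unfolding of $v^{k+1}$, expresses $g$ as a polynomial in $v$ of degree $k+1$ with smooth coefficients in $u$. Tschirnhaus-type substitutions in $v$ kill the subdominant monomials, and the surviving coefficient of $v$ (a function of $u$ that the non-degeneracy hypothesis shows to be a local diffeomorphism) can be used as a new $u$-coordinate, producing the Whitney normal forms $(u,v^2)$, $(u,v^3+uv)$ and $(u,v^4+uv)$; case (1) alone is the Morse lemma with parameter.

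For (4) and (5), the hypothesis $d\Lambda(p)=0$ means $p$ is a Morse critical point of $\Lambda=g_v$, and the Morse lemma for $\Lambda$ furnishes coordinates in which $g_v=\pm\tilde u^2\pm\tilde v^2$. The definite case $\det\hess\Lambda(p)>0$ automatically forces $\partial_v^3 g(p)\neq 0$ and yields, after integration in $v$ and absorption of the lower-order pure $u$-term into the target, the lips model $(u,v^3+u^2v)$; the indefinite case $\det\hess\Lambda(p)<0$ needs the extra assumption $\eta\eta\Lambda(p)=\partial_v^3 g(p)\neq 0$ precisely to prevent the $v^3$ coefficient from vanishing, whereupon the same procedure yields the beaks model $(u,v^3-u^2v)$. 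The principal obstacle throughout is upgrading jet-level agreement to a smooth $\mathcal{A}$-equivalence; this is supplied by Mather's finite determinacy theorem (equivalently the Malgrange preparation theorem), which is the core analytic input that makes these jet-level criteria sufficient rather than merely necessary.
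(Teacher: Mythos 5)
The paper does not prove this statement; it is quoted as a Fact from Whitney and Saji \cite{s1,w}, so there is no internal proof to compare against. Your overall strategy --- reduce a corank $1$ germ to the prenormal form $f=(u,g(u,v))$ so that $\Lambda=g_v$ and $\eta=\partial_v$, translate each hypothesis into Taylor coefficients of $g$, apply the Malgrange preparation theorem and Tschirnhaus substitutions to reach the Whitney/Rieger models, and handle the converse directions by observing that $\eta^{(k)}\Lambda(p)$ and $\sgn\det\hess(\Lambda(p))$ are $\mathcal{A}$-invariants --- is the standard route and is essentially sound for items (1)--(3), including your correct observation that $d\Lambda(p)\neq0$ together with $\eta\Lambda(p)=0$ forces $g_{uv}(p)\neq0$, which is what makes the coefficient of $v$ usable as a new source coordinate. (One point you assert rather than prove, and which is a genuine lemma in Saji's paper, is that $\eta\eta\Lambda(p)$ and $\eta\eta\eta\Lambda(p)$ are independent of the choice of extension of $\eta$ off $S(f)$ once the lower-order derivatives vanish; this is needed before you may compute with the specific choice $\eta=\partial_v$.)

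The genuine gap is in your treatment of (4) and (5). You apply the Morse lemma to $\Lambda$ to get coordinates in which ``$g_v=\pm\tilde u^2\pm\tilde v^2$'' and then ``integrate in $v$.'' But the Morse coordinate change is an arbitrary source diffeomorphism: after it, the first component of $f$ is no longer a coordinate, the transformed $\Lambda$ acquires a Jacobian factor and is no longer the $v$-derivative of the second component, and within the restricted class of changes $(u,v)\mapsto(\phi(u),\psi(u,v))$ that preserves the prenormal form the Morse lemma is simply not available (one cannot rotate $u$ into $v$). So the step from the Morse normal form of $\Lambda$ to a normal form of $f$ is not justified as written. The repair is to stay with the machinery you already used for (1)--(3): since $\det\hess(\Lambda(p))=(g_{uuv}g_{vvv}-g_{uvv}^2)(p)$, the hypothesis in (4) (or the extra hypothesis in (5)) gives $g_{vvv}(p)\neq0$, so preparation and a Tschirnhaus substitution yield $(u,\;v^3+b(u)v+c(u))$ with $b(0)=b'(0)=0$; absorbing $c$ into the target, one computes $\det\hess(\Lambda(p))=6\,b''(0)$ up to a positive factor, and the sign of $b''(0)$ decides lips versus beaks, with $3$-determinacy of $(u,v^3\pm u^2v)$ finishing the argument. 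Your identification of which hypotheses do what (in particular, that $\det\hess>0$ forces $\eta\eta\Lambda(p)\neq0$ while $\det\hess<0$ does not, whence the extra condition in (5)) is exactly right; only the mechanism needs replacing.
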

Criteria for more degenerate corank $1$ singularities of maps from the plane into the plane 
are given by Kabata \cite{kaba}. 

\subsection{Singularities of Gauss maps of fronts}
Let $f:\Sig\to\R^3$ be a front, $\nu:\Sig\to S^2$ the Gauss map of $f$ 
and $p\in\Sig$ a non-degenerate singular point. 
It is obviously that $\nu$ is a $C^\infty$ map between $2$-dimensional manifolds.

We consider the singularities of Gauss map of a front. 
\begin{prop}\label{singptgauss}
Let $f:\Sig\to\R^3$ be a front, $\nu$ the Gauss map and 
$p\in \Sig$ a non-degenerate singular point of $f$. 
Then the Gauss map $\nu$ is also singular at $p$ if and only if $\kappa(p)=0$, 
where $\kappa$ is a bounded principal curvature near $p$. 
Moreover, $p$ is a non-degenerate singular point of $\nu$ if and only if $p$ is not a singular point of $\kappa$.
\end{prop}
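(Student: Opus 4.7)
My plan is to work with a global analogue of the identifier of singularity for the Gauss map $\nu:\Sig\to S^2$, namely the scalar function
\[
\Lambda_\nu := \det(\nu_u,\nu_v,\nu),
\]
defined in any local chart $(u,v)$ on $\Sig$ near $p$. The first observation is that since $\inner{\nu}{\nu_u}=\inner{\nu}{\nu_v}=0$, the cross product $\nu_u\times\nu_v$ is parallel to $\nu$; hence $\Lambda_\nu(q)=0$ if and only if $\nu_u,\nu_v$ are linearly dependent at $q$, i.e.\ $q\in S(\nu)$. Moreover, up to multiplication by a positive function, $\Lambda_\nu$ is the identifier of singularity of $\nu$ computed in any local chart of $S^2$ near $\nu(p)$, so the non-degeneracy of $p$ as a singular point of $\nu$ is equivalent to $d\Lambda_\nu(p)\neq 0$.

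Next I would relate $\Lambda_\nu$ to the principal curvatures. Take an adapted coordinate system $(U;u,v)$ around $p$. On the open dense set $U\setminus\{v=0\}$ of regular points of $f$, the standard Weingarten identity gives $\nu_u\times\nu_v=K(f_u\times f_v)$, hence
\[
\Lambda_\nu=K\lambda=\kappa\,\tilde\kappa\,\lambda=\kappa\,\hat\kappa
\]
where $\hat\kappa=\lambda\tilde\kappa$. By the proof of Proposition \ref{prop:change}, $\hat\kappa$ extends to a $C^\infty$ function on all of $U$ with $\hat\kappa(p)\neq 0$ (it is proportional to $\kappa_c$ or $\mu_c$, which are nonzero since $f$ is a front). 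Since $\kappa$ is also $C^\infty$ on $U$, both sides of $\Lambda_\nu=\kappa\hat\kappa$ are $C^\infty$ on $U$ and agree on the open dense regular set, so the identity holds on all of $U$.

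The two assertions follow immediately. For the first, $\hat\kappa(p)\neq 0$ implies $\Lambda_\nu(p)=0$ if and only if $\kappa(p)=0$. For the second, differentiating gives $d\Lambda_\nu=\hat\kappa\,d\kappa+\kappa\,d\hat\kappa$, and at a point $p$ with $\kappa(p)=0$ this reduces to $d\Lambda_\nu(p)=\hat\kappa(p)\,d\kappa(p)$; since $\hat\kappa(p)\neq 0$, the condition $d\Lambda_\nu(p)\neq 0$ is equivalent to $d\kappa(p)\neq 0$, which is precisely the statement that $p$ is not a singular point of the function $\kappa$.

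The main obstacle is ensuring the identity $\Lambda_\nu=\kappa\hat\kappa$ is valid \emph{across} the singular curve of $f$, since the naive factorization $K=\kappa\tilde\kappa$ involves the unbounded principal curvature $\tilde\kappa$. The trick is to pair $\tilde\kappa$ with $\lambda$ first, forming the bounded smooth function $\hat\kappa$, before invoking continuity; this is exactly where the result cited in Proposition \ref{prop:change} (and [t2, Remark~3.2]) does the real work, handling uniformly both the cuspidal-edge (first-kind) and second-kind cases.
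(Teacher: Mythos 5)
Your proof is correct and follows essentially the same route as the paper: both use $\Lambda=\det(\nu_u,\nu_v,\nu)=K\lambda=\kappa\hat\kappa$ with $\hat\kappa=\lambda\tilde\kappa$ nonvanishing at $p$, and then treat $\kappa$ as the identifier of singularity of $\nu$. Your explicit product-rule step $d\Lambda_\nu(p)=\hat\kappa(p)\,d\kappa(p)$ and the remark on extending the identity $\Lambda_\nu=\kappa\hat\kappa$ smoothly across $S(f)$ just make explicit what the paper leaves implicit.
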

\begin{proof}
Since $p$ is a non-degenerate singular point of a front $f$, 
there exist a neighborhood $V\subset\Sig$ of $p$ and a regular curve $\gamma:(-\epsilon,\epsilon)\to V$ 
such that $\im(\gamma)=S(f)\cap V$. 
Let $(u,v)$ be a coordinate system of $V$. 
Then an identifier of singularity $\Lambda:V\to\R$ of $\nu$ is given by 
\begin{equation}\label{Lambda}
\Lambda(u,v)=\det(\nu_u,\nu_v,\nu)(u,v).
\end{equation}
By the Weingarten formula, 
the function $\Lambda$ as in \eqref{Lambda} can be expressed as 
$$\Lambda=K\lambda=\kappa\hat{\kappa},$$
where $\hat{\kappa}=\lambda\tilde{\kappa}$. 
Since $\hat{\kappa}(p)\neq0$, $p$ is a singular point of $\nu$ if and only if $\kappa(p)=0$ holds. 

By this argument, we can regard $\til{\Lambda}=\kappa$ as the identifier of singularity of $\nu$. 
Hence $p$ is a non-degenerate singular point of the Gauss map $\nu$ if and only if 
$\kappa(p)=0$ and $(\partial_u\kappa(p),\partial_v\kappa(p))\neq(0,0)$, 
where $\partial_u=\partial/\partial u$ and $\partial_v=\partial/\partial v$. 
\end{proof}

It is known that a non-degenerate singular point of a front is also a singular point of its Gauss map 
if and only if the limiting normal curvature $\kappa_\nu$ vanishes at $p$ (cf. \cite{msuy}). 
Moreover, in such a case, the Gaussian curvature of a front is {\it rationally bounded} 
(see \cite[Theorem B and Corollary C]{msuy}). 
For detailed definition, see \cite[Definition 3.4]{msuy}. 

By Proposition \ref{singptgauss}, we may assume that the set of singular points $S(\nu)$ of the Gauss map $\nu$ 
is given as $S(\nu)=\{q\in U\ |\ \kappa(q)=0\}$, and $\nu$ has only corank $1$ singularities at $p$. 
As in the case of regular surfaces, 
we call a point $q\in S(\nu)=\kappa^{-1}(0)$ and a curve given by $\kappa^{-1}(0)$ 
a {\it parabolic point} and  a {\it parabolic curve} of $f$, respectively.
\begin{thm}\label{typegauss}
Let $f:\Sig\to\R^3$ be a front, $\nu$ the Gauss map, 
$p\in\Sig$ a non-degenerate singular point of $f$ 
and $\kappa$ a bounded principal curvature at $p$. 
Assume that $p$ is a parabolic point of $f$. 
Then the following assertions hold.
\begin{enumerate}
\item\label{critgauss:1} Suppose that $p$ is a regular point of $\kappa$. 
\begin{itemize}
\item $p$ is a fold of $\nu$ if and only if $p$ is not a ridge point.
\item $p$ is a cusp of $\nu$ if and only if $p$ is a first order ridge point. 
\item $p$ is a swallowtail of $\nu$ if and only if $p$ is a second order ridge point.
\end{itemize}
\item\label{critgauss:2} Suppose that $p$ is a singular point of $\kappa$.
\begin{itemize}
\item $p$ is a lips of $\nu$ if and only if $\det\hess(\kappa(p))>0$.
\item $p$ is a beaks of $\nu$ if and only if $\det\hess(\kappa(p))<0$  
and $p$ is a first order ridge point.
\end{itemize}
\end{enumerate}
\end{thm}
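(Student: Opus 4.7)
The plan is to invoke Fact~\ref{singular_plane} applied to the Gauss map $\nu:\Sig\to S^2$. By the proof of Proposition~\ref{singptgauss}, the identifier of singularity factors as $\Lambda=\kappa\hat{\kappa}$ with $\hat{\kappa}(p)\neq 0$, so $\tilde{\Lambda}=\kappa$ plays the role of the effective identifier. The task is to translate each of the five criteria in Fact~\ref{singular_plane} into a condition on $\kappa$ alone, by choosing a convenient null vector field for $\nu$ and exploiting $\hat{\kappa}(p)\neq 0$.

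The first step is to identify the null vector field of $\nu$ at $p$. I would invoke the Weingarten-type identity $d\nu(\bm{V})=-\kappa\,df(\bm{V})$, which extends smoothly to a neighborhood of $p$ by the construction of the principal vector $\bm{V}$ in \cite[(3.2)--(3.3)]{t2}. When $\kappa(p)=0$ this gives $d\nu_p(\bm{V}_p)=0$. Since $\bm{V}$ is nowhere vanishing and the corank of $\nu$ at $p$ equals one (because $L_f=(f,\nu)$ is an immersion while $\rank df_p=1$), $\bm{V}$ spans the kernel of $d\nu_p$ and thus serves as a null vector field for $\nu$ on a neighborhood of $p$ in $S(\nu)=\kappa^{-1}(0)$. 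The derivatives $\eta\Lambda$, $\eta\eta\Lambda$, $\eta\eta\eta\Lambda$ appearing in Fact~\ref{singular_plane} may therefore be replaced by $\bm{V}\Lambda$, $\bm{V}^{(2)}\Lambda$, $\bm{V}^{(3)}\Lambda$ up to nowhere-zero functional factors which do not affect the vanishing conditions.

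The second step is routine bookkeeping via the Leibniz rule applied to $\Lambda=\kappa\hat{\kappa}$ at $p$. Since $\kappa(p)=0$ one has $d\Lambda(p)=\hat{\kappa}(p)\,d\kappa(p)$, so the non-degeneracy $d\Lambda(p)\neq 0$ in Fact~\ref{singular_plane} is exactly $d\kappa(p)\neq 0$; this matches the split between parts~\ref{critgauss:1} and~\ref{critgauss:2}. In part~\ref{critgauss:1}, an easy induction shows that whenever $\bm{V}^{(j)}\kappa(p)=0$ for all $1\le j\le m-1$, one has $\bm{V}^{(m)}\Lambda(p)=\hat{\kappa}(p)\bm{V}^{(m)}\kappa(p)$; the fold/cusp/swallowtail criteria then translate directly into the ridge-order definition. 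In part~\ref{critgauss:2}, one has $\kappa(p)=0$ and $d\kappa(p)=0$, so expanding $\Lambda_{ij}=\kappa_{ij}\hat{\kappa}+\kappa_i\hat{\kappa}_j+\kappa_j\hat{\kappa}_i+\kappa\hat{\kappa}_{ij}$ at $p$ yields $\hess\Lambda(p)=\hat{\kappa}(p)\cdot\hess\kappa(p)$, and therefore $\det\hess\Lambda(p)=\hat{\kappa}(p)^2\det\hess\kappa(p)$ has the same sign as $\det\hess\kappa(p)$. For the beaks case, the additional condition $\bm{V}^{(2)}\Lambda(p)\neq 0$ collapses, by the same computation, to $\bm{V}^{(2)}\kappa(p)\neq 0$, which together with the automatic vanishing $\bm{V}\kappa(p)=0$ is precisely the first-order ridge condition.

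The main obstacle is the first step: confirming that the principal vector $\bm{V}$, in terms of which the ridge-order conditions are phrased, coincides up to a nowhere-zero scalar function with the null vector field of $\nu$ near $p$. Once this identification is in place, the rest of the argument is a sequence of product-rule computations in which the factor $\hat{\kappa}(p)\neq 0$ preserves every vanishing and sign condition demanded by Fact~\ref{singular_plane}.
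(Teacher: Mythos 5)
Your proposal is correct and takes essentially the same route as the paper: it reduces the identifier of singularity of $\nu$ to the bounded principal curvature $\kappa$ via $\Lambda=\kappa\hat{\kappa}$ with $\hat{\kappa}(p)\neq0$, identifies the null vector field of $\nu$ with the principal vector $\bm{V}$ (the paper does this by explicitly solving the linear system for $\ker d\nu$ in adapted coordinates, you by the Rodrigues-type identity $d\nu(\bm{V})=-\kappa\,df(\bm{V})$ together with a corank count — the same fact from \cite{t2}), and then applies Fact \ref{singular_plane} together with the definition of (higher order) ridge points. Your Leibniz-rule bookkeeping showing that the factor $\hat{\kappa}(p)\neq0$ preserves all vanishing and sign conditions is a slightly more explicit version of the paper's direct replacement of $\Lambda$ by $\til{\Lambda}=\kappa$.
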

\begin{proof}
Assume that $p$ is a non-degenerate singular point of the second kind, 
and take an adapted coordinate system $(U;u,v)$ centered at $p$. 
Then there exists a map $h:U\to\R^3\setminus\{0\}$ such that $f_u=vh-\epsilon(u)f_v$. 
We set functions as follows:
\begin{equation}\label{fundamentals}
\what{E}=|h|^2,\ \what{F}=\langle{h,f_v}\rangle,\ \what{G}=|f_v|^2,\ 
\what{L}=-\langle{h,\nu_u}\rangle,\ \what{M}=-\langle{h,\nu_v}\rangle,\ \what{N}=-\langle{f_v,\nu_v}\rangle,
\end{equation}
where $|\bm{x}|^2=\langle{\bm{x},\bm{x}}\rangle$ for any $\bm{x}\in\R^3$.
Note that $\what{E}\what{G}-\what{F}^2\neq0$ and $\what{L}(p)\neq0$ hold. 
We assume that $\what{L}(p)>0$. 
In this case, principal curvatures $\kappa$ and $\tilde{\kappa}$ can be written as 
\begin{equation}\label{principal}
\kappa=\frac{2((\what{L}+\epsilon(u)\what{M})\what{N}-v\what{M}^2)}{\what{A}+\what{B}},\quad
\tilde{\kappa}=\frac{2((\what{L}+\epsilon(u)\what{M})\what{N}-v\what{M}^2)}{\what{A}-\what{B}},
\end{equation}
where 
\begin{align*}
\what{A}&=\what{G}(\what{L}+\epsilon(u)\what{M})-2v\what{F}\what{M}+v\what{E}\what{N},\\
\what{B}&=\sqrt{\what{A}^2-4v(\what{E}\what{G}-\what{F}^2)\left((\what{L}+\epsilon(u)\what{M})\what{N}
-v\what{M}^2\right)}
\end{align*}
(cf. \cite[Theorem 3.1]{t2}). 
Moreover, by using functions as in \eqref{fundamentals}, $\nu_u$ and $\nu_v$ are expressed as 
\begin{align*}
\nu_u&=\frac{\what{F}(v\what{M}-\epsilon\what{N})-\what{G}\what{L}}{\what{E}\what{G}-\what{F}^2}h+
\frac{\what{F}\what{L}-\what{E}(v\what{M}-\epsilon\what{N})}{\what{E}\what{G}-\what{F}^2}f_v,\\
\nu_v&=\frac{\what{F}\what{N}-\what{G}\what{M}}{\what{E}\what{G}-\what{F}^2}h+
\frac{\what{F}\what{M}-\what{E}\what{N}}{\what{E}\what{G}-\what{F}^2}f_v
\end{align*}
by \cite[Lemma 2.8]{t2}.

First, we prove $(1)$. 
In this case, $S(\nu)$ can be parametrized by a regular curve and there exists a null vector field 
$\bm{X}=X_1\partial_u+X_2\partial_v$, where $X_i$ ($i=1,2$) are $C^\infty$ functions on $U$. 
We find explicit form of $\bm{X}$. 
If $X$ is a null vector field, then $d\nu(\bm{X})=0$ holds on $S(\nu)$. 
By the above expressions, this is equivalent to the equation 
\begin{equation*}
\begin{pmatrix}\what{E} & \what{F}\\ \what{F} & \what{G}\end{pmatrix}^{-1}
\begin{pmatrix} \what{L} & \what{M}\\ v\what{M}-\epsilon(u)\what{N} & \what{N} \end{pmatrix}
\begin{pmatrix} X_1\\ X_2\end{pmatrix}=
\begin{pmatrix}0\\0\end{pmatrix}.
\end{equation*}
Since $\what{L}$ does not vanish at $p$, 
we can take $X_1=-\what{M}$ and $X_2=\what{L}$. 
Moreover, $\bm{X}$ can be extended on $U$ by the form 
$$\bm{X}=(-\what{M}+\kappa\what{F})\partial_u+(\what{L}-\kappa(v\what{E}-\epsilon(u)\what{F}))\partial_v(=\bm{V})$$
since $\kappa=0$ on $S(\nu)$ (cf. \cite[(3.2)]{t2}). 
This implies that the principal vector $\bm{V}$ can be regarded as $\bm{X}$. 
Moreover, the discriminant function of $\nu$ is given by $\til{\Lambda}=\kappa$. 
By Fact \ref{singular_plane} and the definition of a (higher order) ridge point, 
assertion $(1)$ holds. 

Next we prove $(2)$. 
By the above arguments, a null vector satisfies $\bm{X}=\bm{V}$. 
Now the equation $\det\hess(\til{\Lambda}(p))=\det\hess(\kappa(p))$ holds.  
By Fact \ref{singular_plane} and the definition of the first order ridge point, the result follows. 

For the case of cuspidal edges, we can show assertions in the similar way. 
\end{proof}

We remark that this kind of characterizations of fold and cusp singularities of Gauss maps for regular surfaces 
are given (cf. \cite{bgm,ifrt}), 
and stability of Gauss maps of regular surfaces is studied in \cite{bl}. 
We also remark that relationship between $A_{k+1}$-inflection points on immersed hypersurfaces 
and $A_k$-Morin singularities on corresponding affine Gauss map are known (see \cite{suy2}).

\subsection{Contact between parabolic curves and singular curves of cuspidal edges}
We consider contact between a parabolic curve and a singular curve of a cuspidal edge at $p$. 
\begin{dfn}\label{contact}
Let $\alpha:I\ni t\mapsto\alpha(t)\in\R^2$ be a regular plane curve. 
Let $\beta$ be an another plane curve defined by the zero set of a smooth function $F:\R^2\to\R$. 
We say that {\it $\alpha$ has $(k+1)$-point contact at $t_0\in I$ with $\beta$} 
if the composite function $g(t)=F(\alpha(t))$ satisfies 
$$g(t_0)=g'(t_0)=\cdots=g^{(k)}(t_0)=0,\quad g^{(k+1)}(t_0)\neq0,$$
where $g^{(i)}=d^ig/dt^i$ $(1\leq i\leq k+1)$ (see Figure \ref{fig:contact}). 
Moreover, $\alpha$ has {\it at least $(k+1)$-point contact at $t_0$ with $\beta$} if 
the function $g(t)=F(\alpha(t))$ satisfies 
$$g(t_0)=g'(t_0)=\cdots=g^{(k)}(t_0)=0.$$ 
In this case, we call the integer $k$ the {\it order} of contact. 
\end{dfn} 

\begin{figure}[htbp]
  \begin{center}
    \begin{tabular}{c}

      \begin{minipage}{0.3\hsize}
        \begin{center}
          \includegraphics[clip, width=3cm]{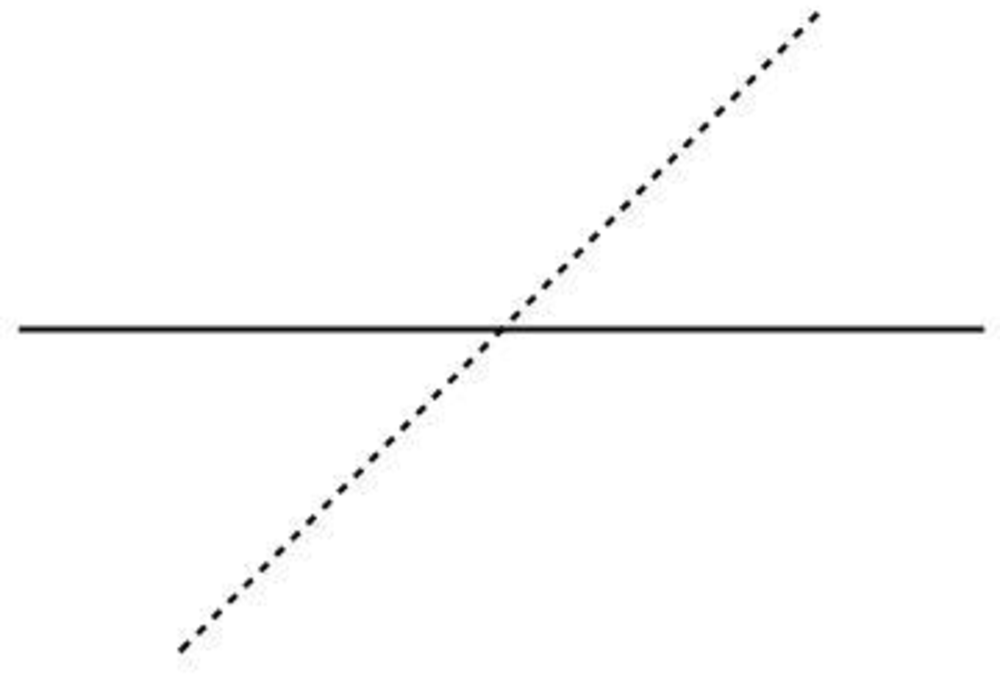}
          \hspace{1.6cm} $1$-point contact
        \end{center}
      \end{minipage}

      \begin{minipage}{0.3\hsize}
        \begin{center}
          \includegraphics[clip, width=3cm]{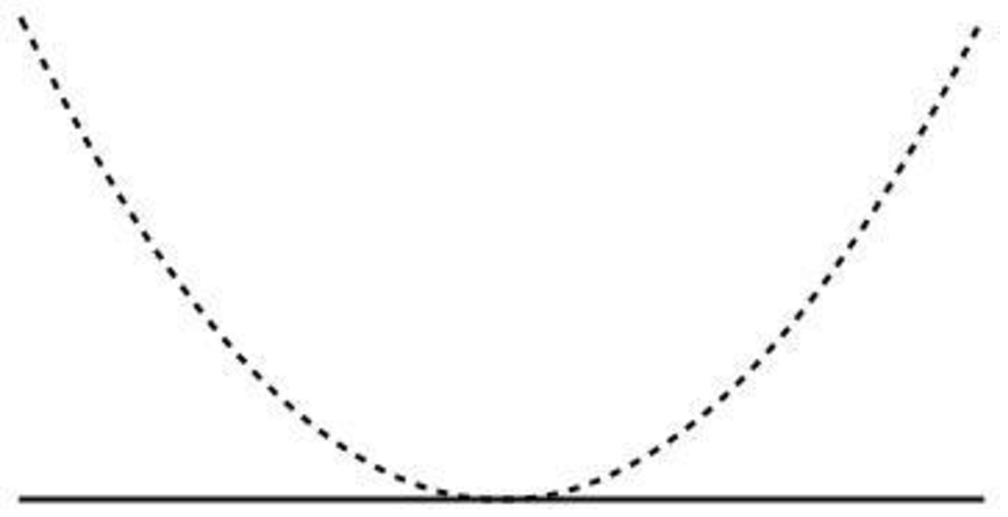}
          \hspace{1.6cm} $2$-point contact
        \end{center}
      \end{minipage}

      \begin{minipage}{0.3\hsize}
        \begin{center}
          \includegraphics[clip, width=3cm]{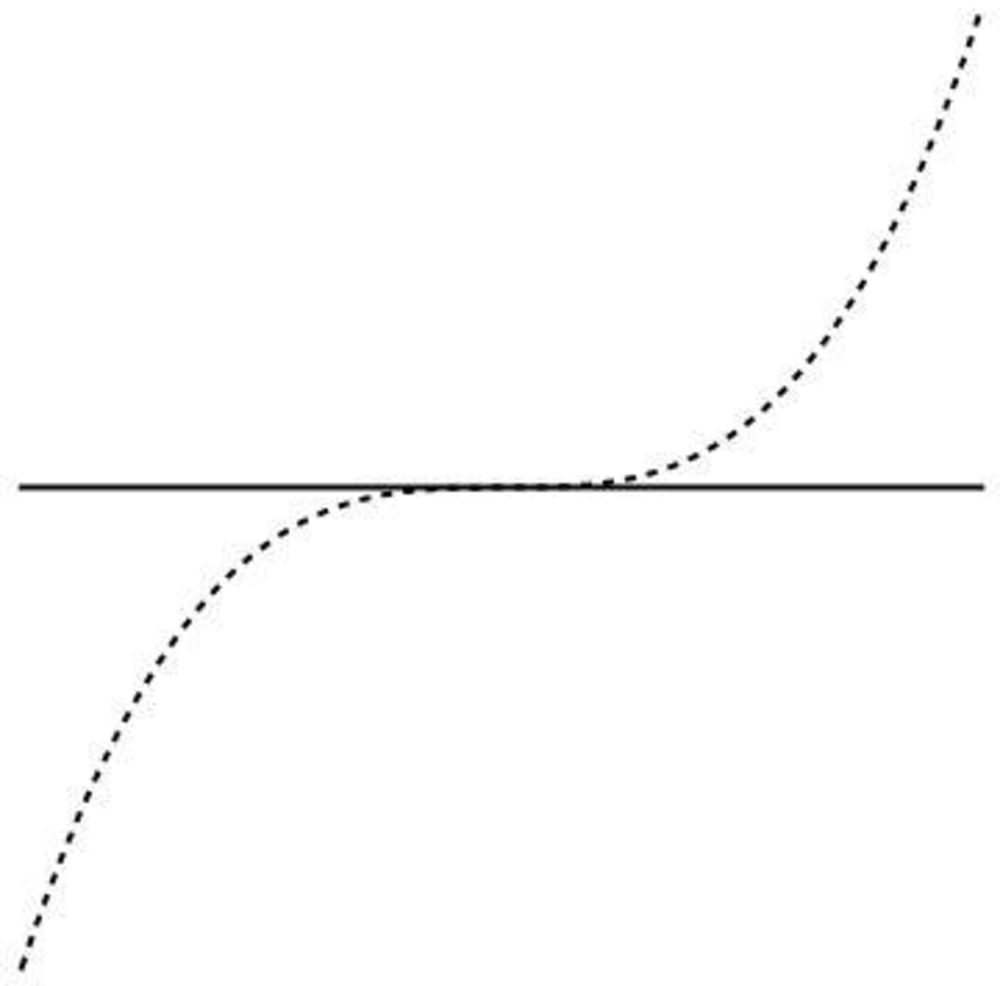}
          \hspace{1.6cm} $3$-point contact
        \end{center}
      \end{minipage}

    \end{tabular}
    \caption{Contact between $(t,0)$ and $F(x,y)=x^k-y$ at the origin, 
    where $k=1$ (left), $k=2$ (center) and $k=3$ (right).}
    \label{fig:contact}
  \end{center}
\end{figure}

It is known that the curve $\alpha$ has $(k+1)$-point contact at $t_0$ with $\beta$ 
if and only if the composite function $g$ has an $A_{k}$ singularity at $t_0$, 
where a function $f:\R\to\R$ has an {\it$A_k$ singularity} at $t_0\in \R$ if 
$f^{(i)}(t_0)=0$ $(1\leq i\leq k)$ and $f^{(k+1)}(t_0)\neq0$ hold (cf. \cite{bg,ifrt}).

First, we show the following lemma.
\begin{lem}\label{regularparabolic}
Let $f:\Sig\to\R^3$ be a front, $\nu$ its Gauss map and $p\in\Sig$ a cuspidal edge of $f$. 
Let $\kappa$ be a bounded principal curvature near $p$. 
Then the parabolic curve defined by $\kappa=0$ is regular at $p$ if and only if 
either $\kappa_\nu'\neq0$ or $4\kappa_t^2+\kappa_s\kappa_c^2\neq0$ holds at $p$.
\end{lem}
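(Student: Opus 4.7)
The plan is to work in an adapted coordinate system $(U;u,v)$ centered at the cuspidal edge $p$ with $\lambda_v(u,0)>0$. In this coordinate system the parabolic curve is the zero set of $\kappa$, so it is regular at $p$ precisely when $d\kappa(p)\neq 0$, i.e.\ when $\kappa_u(p)\neq 0$ or $\kappa_v(p)\neq 0$. Since $\kappa$ coincides with the limiting normal curvature $\kappa_\nu$ along the singular curve $\{v=0\}$, the identity $\kappa_u(u,0)=\kappa_\nu'(u)$ is immediate. The content of the lemma therefore reduces to showing that $\kappa_v(p)$ is a nonzero scalar multiple of $(4\kappa_t^2+\kappa_s\kappa_c^2)(p)$.

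To compute $\kappa_v(p)$, I would derive the cuspidal-edge counterpart of formula \eqref{principal} (argued as in \cite[Theorem 3.1]{t2}),
\[
\kappa=\frac{2(\wtil{L}\wtil{N}-v\wtil{M}^2)}{A+\sqrt{A^2-4v(\wtil{E}\wtil{G}-\wtil{F}^2)(\wtil{L}\wtil{N}-v\wtil{M}^2)}},\qquad A=\wtil{E}\wtil{N}+v(\wtil{G}\wtil{L}-2\wtil{F}\wtil{M}).
\]
Because $\kappa_\nu(p)=0$ forces $\wtil{L}(p)=0$, differentiating this quotient in $v$ and evaluating at $p$ collapses to $\kappa_v(p)=(\wtil{L}_v(p)\wtil{N}(p)-\wtil{M}(p)^2)/(\wtil{E}(p)\wtil{N}(p))$. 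The crucial intermediate step is evaluating $\wtil{L}_v(p)$: since $f_v=vh$ gives $f_{uuv}(u,0)=0$, differentiating $\wtil{L}=\langle f_{uu},\nu\rangle$ reduces to $\wtil{L}_v(u,0)=\langle f_{uu},\nu_v\rangle(u,0)$. Substituting the expansion of $f_{uu}$ from Lemma \ref{seconddiff1}, together with $\langle f_u,\nu_v\rangle(u,0)=0$ (from $\langle f_u,\nu\rangle=0$ and $f_{uv}(u,0)=0$), $\langle h,\nu_v\rangle=-\wtil{N}$, and $\langle\nu,\nu_v\rangle=0$, yields $\wtil{L}_v(p)=-\wtil{\varGamma}{^{\,\,2}_{1\,1}}(p)\wtil{N}(p)$.

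Finally I would translate $\wtil{\varGamma}{^{\,\,2}_{1\,1}}$, $\wtil{M}$, $\wtil{N}$ at $p$ into the invariants via \eqref{inv_cusp}: using $\wtil{L}(p)=0$ and $\wtil{E}_{vv}(u,0)=2\langle f_u,h_u\rangle$, a direct comparison with \eqref{inv_cusp} gives $\wtil{\varGamma}{^{\,\,2}_{1\,1}}(p)=\kappa_s\wtil{E}^{3/2}(p)/\sqrt{(\wtil{E}\wtil{G}-\wtil{F}^2)(p)}$, $\wtil{M}(p)^2=\kappa_t^2(\wtil{E}\wtil{G}-\wtil{F}^2)(p)$, and $\wtil{N}(p)=\kappa_c(\wtil{E}\wtil{G}-\wtil{F}^2)^{3/4}(p)/(2\wtil{E}^{3/4}(p))$. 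Substituting and simplifying produces
\[
\kappa_v(p)=-\frac{(\wtil{E}\wtil{G}-\wtil{F}^2)^{1/4}(p)}{2\kappa_c(p)\wtil{E}^{1/4}(p)}\bigl(4\kappa_t^2+\kappa_s\kappa_c^2\bigr)(p),
\]
and the prefactor is nonzero because $\kappa_c(p)\neq 0$ at a cuspidal edge by \cite[Proposition 3.11]{msuy}. Hence $\kappa_v(p)\neq 0$ is equivalent to $(4\kappa_t^2+\kappa_s\kappa_c^2)(p)\neq 0$, completing the proof. The main obstacle is the $\wtil{L}_v(p)$ computation: it requires exploiting the adapted-coordinate structure (namely that $f_{uuv}$ vanishes along the singular curve) and matching the Christoffel coefficient $\wtil{\varGamma}{^{\,\,2}_{1\,1}}$ against the expression for $\kappa_s$ in \eqref{inv_cusp} without sign errors.
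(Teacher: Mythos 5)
Your proposal is correct, and its skeleton is identical to the paper's: reduce regularity of $\kappa^{-1}(0)$ at $p$ to $(\partial_u\kappa(p),\partial_v\kappa(p))\neq(0,0)$, identify $\partial_u\kappa(u,0)=\kappa_\nu'(u)$ via $\kappa(u,0)=\kappa_\nu(u)$, and show $\partial_v\kappa(p)$ is a nonzero multiple of $(4\kappa_t^2+\kappa_s\kappa_c^2)(p)$. The difference is that the paper disposes of the last step in one line by quoting the formula \eqref{eq:kv} from \cite[Proposition 2.8]{t3}, whereas you rederive it: I checked your chain of identities ($\wtil{L}_v(u,0)=-\wtil{\varGamma}{^{\,\,2}_{1\,1}}\wtil{N}$ from $f_{uuv}(u,0)=0$ and Lemma \ref{seconddiff1}; the translations of $\wtil{\varGamma}{^{\,\,2}_{1\,1}}$, $\wtil{M}$, $\wtil{N}$ into $\kappa_s,\kappa_t,\kappa_c$ via \eqref{inv_cusp}) and the end result agrees exactly with \eqref{eq:kv}, so your argument makes the lemma self-contained at the cost of a page of computation. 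Two small caveats. First, your simplification $\partial_v\kappa(p)=(\wtil{L}_v\wtil{N}-\wtil{M}^2)/(\wtil{E}\wtil{N})(p)$ and the identity $\wtil{M}(p)^2=\kappa_t^2(\wtil{E}\wtil{G}-\wtil{F}^2)(p)$ both use $\wtil{L}(p)=0$; this is legitimate only because $p$ is implicitly assumed to lie on the parabolic curve (as it is in every application of the lemma), and you should say so explicitly. In fact the extra terms one gets when $\wtil{L}(p)\neq0$ combine with $-\wtil{M}^2$ into $-(\wtil{E}\wtil{M}-\wtil{F}\wtil{L})^2/\wtil{E}^3\wtil{N}=-\kappa_t^2(\wtil{E}\wtil{G}-\wtil{F}^2)/\wtil{E}\wtil{N}$, so \eqref{eq:kv} holds along the whole $u$-axis; your restricted computation is a special case. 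Second, the identification of the bounded branch as $2(\wtil{L}\wtil{N}-v\wtil{M}^2)/(A+\sqrt{\cdots})$ presupposes a sign normalization ($\wtil{N}(p)>0$, guaranteed up to orientation since $\kappa_c(p)\neq0$); this does not affect the non-vanishing conclusion but deserves a word.
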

\begin{proof}
Let $(U;u,v)$ be an adapted coordinate system around $p$ satisfying $\eta\lambda>0$ on the $u$-axis. 
Then the parabolic curve defined by $\kappa=0$ is regular at $p$ 
if and only if $(\partial_u\kappa,\partial_v\kappa)\neq(0,0)$ at $p$. 
We now remark that $\kappa(u,0)=\kappa_\nu(u)$. 
Thus $\partial_u\kappa(p)\neq0$ if and only if $\kappa_\nu'(p)\neq0$ holds. 
On the other hand, we obtain 
\begin{equation}\label{eq:kv}
\partial_v\kappa
=-\frac{1}{2\kappa_c}(4\kappa_t^2+\kappa_s\kappa_c^2)
\left(\frac{\wtil{E}\wtil{G}-\wtil{F}^2}{\wtil{E}}\right)^{1/4} 
\end{equation}
along the $u$-axis by \cite[Proposition 2.8]{t3}. 
Thus we have the assertion.
\end{proof}
We note that the condition $4\kappa_t(p)^2+\kappa_s(p)\kappa_c(p)^2\neq0$ 
implies that $p$ is not a {\it sub-parabolic point} of $f$ (see \cite{t3}). 
\begin{prop}\label{kappanu}
Let $f:\Sig\to\R^3$ be a front, $\nu$ the Gauss map of $f$ and $p\in\Sig$ a cuspidal edge of $f$. 
Let $\kappa$ be the bounded principal curvature near $p$. 
Suppose that $\kappa(p)=0$ and $(\partial_u\kappa(p),\partial_v\kappa(p))\neq(0,0)$. 
Then the singular curve $\gamma$ passing through $p$ has $(k+1)$-point contact $(k\geq1)$ with 
the parabolic curve defined by $\kappa^{-1}(0)$ at $p$ 
if and only if $4\kappa_t(p)^2+\kappa_s(p)\kappa_c(p)^2\neq0$ and 
$$\kappa_\nu(p)=\kappa_\nu'(p)=\cdots=\kappa_\nu^{(k)}(p)=0\ \textit{and}\ \kappa_\nu^{(k+1)}(p)\neq0$$
hold.
\end{prop}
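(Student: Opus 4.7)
The plan is to work in an adapted coordinate system $(U;u,v)$ around the cuspidal edge $p$ so that the singular curve is the $u$-axis, which I parametrize by $\alpha(t)=(t,0)$. Taking $F=\kappa$ as a defining function of the parabolic curve $\beta=\kappa^{-1}(0)$, Definition \ref{contact} says that the order of contact of $\alpha$ and $\beta$ at $0$ is controlled by the order of vanishing of the composite $g(t):=F(\alpha(t))=\kappa(t,0)$ at $t=0$.

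The first step is to exploit the fact, recalled in Section \ref{sec:3}, that the bounded principal curvature restricts to the limiting normal curvature along the singular curve, i.e.\ $\kappa(u,0)=\kappa_\nu(u)$. This gives $g^{(i)}(0)=\kappa_\nu^{(i)}(p)$ for every $i$, so that the contact condition translates directly into $\kappa_\nu(p)=\kappa_\nu'(p)=\cdots=\kappa_\nu^{(k)}(p)=0$ together with $\kappa_\nu^{(k+1)}(p)\neq 0$.

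It then remains to account for the extra condition $4\kappa_t(p)^2+\kappa_s(p)\kappa_c(p)^2\neq 0$. Since $\partial_u\kappa(u,0)=\kappa_\nu'(u)$, the case $k\geq 1$ always yields $\partial_u\kappa(p)=0$, so the running hypothesis $(\partial_u\kappa(p),\partial_v\kappa(p))\neq(0,0)$ is equivalent to $\partial_v\kappa(p)\neq 0$. The identity \eqref{eq:kv}, borrowed from \cite[Proposition 2.8]{t3}, expresses $\partial_v\kappa$ along the $u$-axis as a nowhere-vanishing multiple of $-(4\kappa_t^2+\kappa_s\kappa_c^2)/(2\kappa_c)$; since $\kappa_c$ does not vanish on cuspidal edges by \cite[Proposition 3.11]{msuy}, this gives $\partial_v\kappa(p)\neq 0$ if and only if $4\kappa_t(p)^2+\kappa_s(p)\kappa_c(p)^2\neq 0$, from which both directions of the stated equivalence follow. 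The only obstacle I anticipate is keeping the sign and adapted-coordinate conventions of \eqref{eq:kv} straight when quoting it; everything else reduces to bookkeeping with Taylor coefficients of $\kappa_\nu$.
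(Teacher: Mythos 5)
Your proposal is correct and follows essentially the same route as the paper: both arguments reduce the contact condition to the vanishing order of $g(t)=\kappa(t,0)=\kappa_\nu(t)$ in an adapted coordinate system, and both identify the condition $4\kappa_t(p)^2+\kappa_s(p)\kappa_c(p)^2\neq0$ with $\partial_v\kappa(p)\neq0$ via \eqref{eq:kv} (the paper quotes this through Lemma \ref{regularparabolic}, which is proved from the same formula). Your observation that for $k\geq1$ the standing hypothesis forces $\partial_v\kappa(p)\neq0$ matches the paper's handling of the regularity of the parabolic curve.
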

\begin{proof}
Take an adapted coordinate system $(U;u,v)$ centered at $p$. 
Then the singular curve $\gamma$ is given as $\gamma(u)=(u,0)$. 
Moreover, it follows that $\kappa(u,0)=\kappa_\nu(u)$ holds (\cite[Theorem 3.1]{t2}). 
When $\partial_u\kappa(p)=\kappa_\nu'(p)=0$, the parabolic curve $\kappa^{-1}(0)$ is regular at $p$ 
if and only if $\partial_v\kappa(p)\neq0$. 
This is equivalent to $4\kappa_t(p)^2+\kappa_s(p)\kappa_c(p)^2\neq0$ by Lemma \ref{regularparabolic}. 
Thus we have the assertion by the definition of contact of two curves (see  Definition \ref{contact}).
\end{proof}
By this proposition, we have the following.
\begin{cor}\label{boundedness}
Let $f:\Sig\to\R^3$ be a front and $p\in\Sig$ a cuspidal edge of $f$. 
Then the Gaussian curvature $K$ of $f$ is rationally bounded at $p$
if and only if a parabolic curve passes through $p$. 
Moreover, $K$ is rationally continuous at $p$ if 
the singular curve passing through $p$ has at least $2$-point contact with a parabolic curve at $p$. 
\end{cor}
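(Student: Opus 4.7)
The plan is to reduce both statements to conditions on the limiting normal curvature $\kappa_\nu$ along the singular curve, and then invoke the characterizations of rational boundedness and rational continuity from \cite{msuy}. The key observation, already recorded earlier in the paper, is that for a cuspidal edge the bounded principal curvature $\kappa$ coincides with $\kappa_\nu$ on the singular curve (see \cite[Theorem 3.1]{t2}, which also appears in the proof of Proposition \ref{kappanu}).

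First I would take an adapted coordinate system $(U;u,v)$ centered at $p$, so that the singular curve is the $u$-axis and $\kappa(u,0)=\kappa_\nu(u)$. Then the condition that a parabolic curve $\kappa^{-1}(0)$ passes through $p$ is equivalent to $\kappa(p)=0$, which is in turn equivalent to $\kappa_\nu(p)=0$. By \cite[Theorem B and Corollary C]{msuy}, the Gaussian curvature $K$ is rationally bounded at a non-degenerate singular point $p$ if and only if $\kappa_\nu(p)=0$. Combining these two equivalences yields the first assertion.

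For the moreover part, suppose the singular curve $u\mapsto(u,0)$ has at least $2$-point contact with the parabolic curve $\kappa^{-1}(0)$ at $p$. By Definition \ref{contact} applied to $g(u)=\kappa(u,0)=\kappa_\nu(u)$, this says $\kappa_\nu(0)=\kappa_\nu'(0)=0$. The corresponding criterion from \cite{msuy} for rational continuity of $K$ at a cuspidal edge requires precisely this combined vanishing of $\kappa_\nu$ and of its derivative along the singular curve, so $K$ is rationally continuous at $p$.

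The main obstacle is simply to invoke the external results of \cite{msuy} in the correct form; once the identification $\kappa|_{S(f)}=\kappa_\nu$ and the reading of at least $k$-point contact in terms of higher derivatives of $\kappa_\nu$ are both in hand, no additional computation is needed.
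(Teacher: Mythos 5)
Your proposal is correct and follows essentially the same route as the paper: the paper likewise reduces both claims to the vanishing of $\kappa_\nu$ and $\kappa_\nu'$ via the identification $\kappa(u,0)=\kappa_\nu(u)$ (packaged there as Proposition \ref{kappanu}) and then invokes the characterization of rational boundedness and rational continuity from \cite[Corollary 3.12]{msuy}. The only difference is that you unfold the content of Proposition \ref{kappanu} directly instead of citing it.
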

\begin{proof}
It is known that the Gaussian curvature $K$ of $f$ is rationally bounded (resp. rationally continuous) 
if and only if $\kappa_\nu(p)=0$ (resp. $\kappa_\nu(p)=\kappa_\nu'(p)=0$) \cite[Corollary 3.12]{msuy}. 
Thus the assertion follows from Proposition \ref{kappanu}.
\end{proof}
This implies that behavior of a bounded principal curvature of a cuspidal edge 
is closely related to rational boundedness of the Gaussian curvature.

Conversely, the following assertion holds.
\begin{prop}\label{godroncusp}
Let $f:\Sig\to\R^3$ be a front, $\nu:\Sig\to S^2$ the Gauss map of $f$ and $p$ a cuspidal edge of $f$. 
Suppose that the Gaussian curvature $K$ of $f$ is rationally continuous at $p$. 
Then  $p$ is a cusp of $\nu$ 
if and only if $\kappa_t(p)=0$ and $\kappa_s(p)\kappa_i'(p)\neq0$.
\end{prop}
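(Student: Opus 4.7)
The plan is to apply Theorem \ref{typegauss}\,(1) in an adapted coordinate system. I first fix an adapted coordinate system $(U;u,v)$ centered at $p$ with $\eta\lambda(u,0)>0$. Rational continuity of $K$ at $p$ is equivalent to $\kappa_\nu(p)=\kappa_\nu'(p)=0$ (by \cite[Corollary 3.12]{msuy}). Since $\kappa(u,0)=\kappa_\nu(u)$ along the singular curve, $\kappa(p)=\partial_u\kappa(p)=0$, and in particular $\wtil{L}(p)=0$. By Theorem \ref{typegauss}\,(1), $p$ is a cusp of $\nu$ if and only if $\kappa$ is a submersion at $p$ and $p$ is a first order ridge point. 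The first condition is $\partial_v\kappa(p)\neq0$, which by Lemma \ref{regularparabolic} and \eqref{eq:kv} reads $4\kappa_t(p)^2+\kappa_s(p)\kappa_c(p)^2\neq0$, and forces the parabolic curve to be tangent to $\partial_u$ at $p$.

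To identify the principal vector $\bm{V}$ near $p$, I use the Weingarten-type expressions of $\nu_u,\nu_v$ in the frame $\{f_u,h,\nu\}$. The vanishing $\wtil{L}(p)=0$ makes $\nu_u(p)$ and $\nu_v(p)$ both parallel to $\wtil{F}(p)f_u(p)-\wtil{E}(p)h(p)$, with coefficients proportional to $\wtil{M}(p)$ and $\wtil{N}(p)$ respectively. Hence $\ker d\nu_p$ is spanned by $\wtil{N}(p)\partial_u-\wtil{M}(p)\partial_v$, and I take $\bm{V}=\wtil{N}\partial_u-\wtil{M}\partial_v$, which is smooth and non-vanishing near $p$ since $\wtil{N}(p)\neq0$ by \eqref{inv_cusp} and $\kappa_c(p)\neq0$. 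Because $\kappa_u(p)=0$,
\[
\bm{V}\kappa(p)=-\wtil{M}(p)\,\partial_v\kappa(p),
\]
so under $\partial_v\kappa(p)\neq0$ the ridge condition $\bm{V}\kappa(p)=0$ is equivalent to $\wtil{M}(p)=0$. Combined with $\wtil{L}(p)=0$, the expression of $\kappa_t$ in \eqref{inv_cusp} reduces to $\kappa_t(p)=\wtil{M}(p)/\sqrt{\wtil{E}(p)\wtil{G}(p)-\wtil{F}(p)^2}$, so the ridge condition becomes $\kappa_t(p)=0$; then \eqref{eq:kv} shows that the submersion condition becomes $\kappa_s(p)\neq0$.

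Finally I compute the first order ridge condition $\bm{V}^{(2)}\kappa(p)\neq0$. Expanding and using $\wtil{M}(p)=0$ and $\kappa_u(p)=0$ gives
\[
\bm{V}^{(2)}\kappa(p)=\wtil{N}(p)^2\kappa_{uu}(p)-\wtil{N}(p)\wtil{M}_u(p)\,\partial_v\kappa(p).
\]
The key step is rewriting each factor in terms of geometric invariants. Since $\kappa(u,0)=\kappa_\nu(u)$, we have $\kappa_{uu}(p)=\kappa_\nu''(p)$; differentiating \eqref{eq:kikskt} at $u=0$ under $\kappa_t(p)=\kappa_\nu'(p)=0$ yields $\kappa_\nu''(p)=\sqrt{\wtil{E}(p)}\bigl(\kappa_i'(p)-\kappa_t'(p)\kappa_s(p)\bigr)$; and differentiating $\kappa_t=\wtil{M}/\sqrt{\wtil{E}\wtil{G}-\wtil{F}^2}$ along the $u$-axis (valid near $p$ since $\wtil{L}(p)=\wtil{M}(p)=\kappa_\nu'(p)=0$) yields $\wtil{M}_u(p)=\kappa_t'(p)\sqrt{\wtil{E}(p)\wtil{G}(p)-\wtil{F}(p)^2}$. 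Substituting these together with the values of $\wtil{N}(p)$ and $\partial_v\kappa(p)$ computed from \eqref{inv_cusp} and \eqref{eq:kv}, the $\kappa_t'(p)\kappa_s(p)$ cross terms cancel, leaving $\bm{V}^{(2)}\kappa(p)$ as a nonzero multiple of $\kappa_i'(p)$. The main obstacle is verifying this algebraic cancellation; once done, the first order ridge condition becomes $\kappa_i'(p)\neq0$, yielding the desired criterion $\kappa_t(p)=0$ and $\kappa_s(p)\kappa_i'(p)\neq0$.
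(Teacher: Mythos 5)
Your proposal follows the same strategy as the paper's proof: reduce to Theorem \ref{typegauss}\,(1) in an adapted coordinate system, translate the regularity of $\kappa^{-1}(0)$ via Lemma \ref{regularparabolic}, identify $\bm{V}\kappa(p)=0$ with $\kappa_t(p)=0$, and turn $\bm{V}^{(2)}\kappa(p)\neq0$ into $\kappa_i'(p)\neq0$ using \eqref{eq:kikskt} and \eqref{eq:kv}. All the invariant identities you use ($\wtil{L}(p)=\wtil{L}_u(p)=0$, $\kappa_t(p)=\wtil{M}(p)/\sqrt{\wtil{E}\wtil{G}-\wtil{F}^2}$, $\kappa_\nu''(p)=\sqrt{\wtil{E}}(\kappa_i'-\kappa_t'\kappa_s)(p)$, $\wtil{M}_u(p)=\kappa_t'(p)\sqrt{\wtil{E}\wtil{G}-\wtil{F}^2}$) are correct, and the cancellation of the $\kappa_t'\kappa_s$ terms does occur, so your final criterion agrees with the paper's. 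The only organizational difference is that the paper first derives the closed formula $\bm{V}\kappa=\tfrac{1}{2\kappa_c}(4\kappa_t^3+\kappa_i\kappa_c^2)\bigl((\wtil{E}\wtil{G}-\wtil{F}^2)^3/\wtil{E}\bigr)^{1/4}$ along the whole $u$-axis and then differentiates in $u$, whereas you expand $\bm{V}^{(2)}\kappa(p)$ pointwise; these are the same computation arranged differently.

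One step needs justification. You replace the principal vector by $\bm{V}=\wtil{N}\partial_u-\wtil{M}\partial_v$ on the strength of its spanning $\Ker d\nu_p$ \emph{at the single point} $p$. The ridge conditions in Theorem \ref{typegauss} are defined through the principal vector field of \cite[(3.2),(3.3)]{t2}, and while $\bm{V}\kappa(p)$ depends only on $\bm{V}(p)$, the quantity $\bm{V}^{(2)}\kappa(p)$ depends on the $1$-jet of the vector field at $p$: replacing $\bm{V}$ by $\bm{V}'=\bm{V}+\rho$ with $\rho(p)=0$ changes it by $\sum_i\bm{V}(\rho_i)(p)\,\partial_i\kappa(p)$, which is not automatically zero since $\partial_v\kappa(p)\neq0$. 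Your field is in general \emph{not} a null field of $\nu$ along $\kappa^{-1}(0)$ away from the $u$-axis, so this must be checked. It does work out here: the genuine principal vector has $V_1=\wtil{N}$ and $V_2=-\kappa_t\sqrt{\wtil{E}\wtil{G}-\wtil{F}^2}=-\wtil{M}+\wtil{F}\wtil{L}/\wtil{E}$ along the $u$-axis, and since rational continuity forces $\wtil{L}(p)=\wtil{L}_u(p)=0$, the discrepancy $\wtil{F}\wtil{L}/\wtil{E}$ vanishes to first order in $u$ at $p$; as $\bm{V}(p)$ points in the $\partial_u$-direction, only $u$-derivatives enter and the two computations of $\bm{V}^{(2)}\kappa(p)$ coincide. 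Either add this verification or simply use $V_2=-\kappa_t\sqrt{\wtil{E}\wtil{G}-\wtil{F}^2}$ from \cite[Proposition 3.3]{t2} as the paper does, which removes the issue.
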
 
\begin{proof}
Let $(U;u,v)$ be an adapted coordinate system around $p$ with $\eta\lambda(u,0)>0$. 
We denote by $\kappa$ and $\bm{V}=V_1\partial_u+V_2\partial_v$ 
a bounded $C^\infty$ principal curvature of $f$ on $U$ 
and a principal vector relative to $\kappa$, respectively. 
We note that $V_1(p)\neq0$.
Since $K$ is rationally continuous at $p$, $\kappa_\nu=\kappa_\nu'=0$ at $p$. 
This implies that $\kappa(p)=\partial_u\kappa(p)=0$. 
In this case, the parabolic curve $\kappa^{-1}(0)$ is regular at $p$ 
if and only if $\partial_v\kappa(p)\neq0$, that is, 
$4\kappa_t^2+\kappa_s\kappa_c^2\neq0$ at $p$ by Lemma \ref{regularparabolic}. 
Thus it follows that 
$$\bm{V}\kappa=V_1\partial_u\kappa+V_2\partial_v\kappa=V_2\partial_v\kappa$$
at $p$. 
Since $V_2=-\kappa_t\sqrt{\wtil{E}\wtil{G}-\wtil{F}^2}$ holds along the $u$-axis (\cite[Proposition 3.3]{t2}), 
$\bm{V}\kappa(p)=0$ if and only if $\kappa_t(p)=0$. 

We calculate second order directional derivative $\bm{V}^{(2)}\kappa(p)$ under the assumptions that 
$\kappa_\nu=\kappa_\nu'=\kappa_t=0$ hold at $p$. 
By a direct computation, we have 
\begin{equation*}
\bm{V}^{(2)}\kappa=V_1\partial_u(\bm{V}\kappa)+V_2\partial_v(\bm{V}\kappa).
\end{equation*}
Since $V_2=0$ at $p$, 
$$\bm{V}^{(2)}\kappa(p)=V_1(p)\partial_u(\bm{V}\kappa)(p)$$ 
holds. 
By \eqref{eq:kikskt} and \eqref{eq:kv}, $\bm{V}\kappa$ can be written as 
\begin{equation*}
\bm{V}\kappa=\frac{1}{2\kappa_c}(4\kappa_t^3+\kappa_i\kappa_c^2)
\left(\frac{(\wtil{E}\wtil{G}-\wtil{F}^2)^3}{\wtil{E}}\right)^{1/4}
\end{equation*}
along the $u$-axis.
Thus $\partial_u(\bm{V}\kappa)\neq0$ at $p$ 
if and only if $12\kappa_t^2 \kappa_t'+\kappa_i' \kappa_c^2+2\kappa_i \kappa_c \kappa_c'\neq0$ at $p$ 
by the assumptions above. 
When $\kappa_\nu'(p)=0$, $\kappa_i(p)=\kappa_s(p)\kappa_t(p)$ holds (\eqref{eq:kikskt}, see also \cite[Theorem 4.4]{ms}). 
Hence $\partial_u(\bm{V}\kappa)\neq0$ at $p$ if and only if $\kappa_i'(p)\neq0$. 
Summing up, we have the assertion.
\end{proof}

For a cuspidal edge, it is known that 
{\it the singular locus $\hat{\gamma}=f\circ\gamma$ is a line of curvature of $f$ 
if and only if the cusp-directional torsion $\kappa_t$ 
identically vanishes along the singular curve $\gamma$} 
(\cite[Proposition 3.3]{t2}, see also \cite{istake2}). 
This is equivalent to the case that the direction of the principal vector $\bm{V}$ 
with respect to the bounded principal curvature $\kappa$ is parallel to 
the direction of $\gamma'$ along the singular curve $\gamma$. 
We now consider relationships between contactness of a singular curve with a parabolic curve 
at the cuspidal edge point and 
types of singularities of the Gauss map under the assumption that 
the singular locus $\hat{\gamma}$ is a line of curvature. 

\begin{prop}\label{curvlines}
Let $f:\Sig\to\R^3$ be a front, $\nu$ its Gauss map and $p\in\Sig$ a cuspidal edge. 
Let $\gamma$ be a singular curve passing through $p$. 
Suppose that $\kappa$ is bounded near $p$, $\kappa^{-1}(0)$ is a regular curve near $p$ 
and $\hat{\gamma}=f\circ\gamma$ is a line of curvature. 
Then 
\begin{enumerate}
\item $p$ is a fold of $\nu$ if and only if $\kappa_\nu'\neq0$ at $p$.
\item $p$ is a cusp of $\nu$ if and only if 
$\kappa_\nu'=0$, $\kappa_\nu''\neq0$ and $\kappa_s\neq0$ at $p$.
\item $p$ is a swallowtail of $\nu$ 
if and only if $\kappa_\nu'=\kappa_\nu''=0$, $\kappa_\nu'''\neq0$ and $\kappa_s\neq0$ at $p$.
\end{enumerate}
\end{prop}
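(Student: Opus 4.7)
The approach is to invoke Theorem \ref{typegauss}\,\ref{critgauss:1} and determine the ridge order at $p$; the line-of-curvature hypothesis will drastically simplify the computation of the iterated directional derivatives $\bm{V}^{(m)}\kappa(p)$. First, I would take an adapted coordinate system $(U;u,v)$ centered at $p$ with $\eta\lambda(u,0)>0$, so that $\kappa(u,0)=\kappa_\nu(u)$ and, by the formula recalled in the proof of Proposition \ref{godroncusp}, $V_2=-\kappa_t\sqrt{\wtil{E}\wtil{G}-\wtil{F}^2}$ along the $u$-axis. The line-of-curvature condition on $\hat{\gamma}$ is equivalent to $\kappa_t(u,0)\equiv 0$, which forces $V_2$ to vanish identically along the singular curve. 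In particular, $\bm{V}(u,0)=V_1(u,0)\partial_u$ with $V_1(p)\neq 0$ (since $\bm{V}$ is nowhere zero), and every $u$-partial $V_{2,u}(p),V_{2,uu}(p),\ldots$ vanishes.

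Next, using $V_2(p)=V_{2,u}(p)=V_{2,uu}(p)=0$ and $\partial_u^m\kappa(u,0)=\kappa_\nu^{(m)}(u)$, iteratively expanding $\bm{V}\kappa=V_1\kappa_u+V_2\kappa_v$ yields at $p$
\begin{align*}
\bm{V}\kappa(p)&=V_1(p)\,\kappa_\nu'(p),\\
\bm{V}^{(2)}\kappa(p)&=V_1(p)^2\,\kappa_\nu''(p)\quad\text{when }\kappa_\nu'(p)=0,\\
\bm{V}^{(3)}\kappa(p)&=V_1(p)^3\,\kappa_\nu'''(p)\quad\text{when }\kappa_\nu'(p)=\kappa_\nu''(p)=0.
\end{align*}
Since $V_1(p)\neq 0$, Theorem \ref{typegauss}\,\ref{critgauss:1} then gives the three characterizations with the $\kappa_\nu$-derivative clauses: non-ridge (fold) iff $\kappa_\nu'(p)\neq 0$, first-order ridge (cusp) iff $\kappa_\nu'(p)=0$ and $\kappa_\nu''(p)\neq 0$, second-order ridge (swallowtail) iff $\kappa_\nu'(p)=\kappa_\nu''(p)=0$ and $\kappa_\nu'''(p)\neq 0$.

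Finally, I would handle the $\kappa_s\neq 0$ clauses in (2) and (3). Under the hypothesis that $\kappa^{-1}(0)$ is a regular curve at $p$, the vanishing $\kappa_u(p)=\kappa_\nu'(p)=0$ forces $\kappa_v(p)\neq 0$; by Lemma \ref{regularparabolic} this is equivalent to $4\kappa_t(p)^2+\kappa_s(p)\kappa_c(p)^2\neq 0$, which under $\kappa_t(p)=0$ and $\kappa_c(p)\neq 0$ (always nonzero along a cuspidal edge) becomes precisely $\kappa_s(p)\neq 0$. Thus the $\kappa_s$-conditions are automatic consequences of the hypotheses in cases (2) and (3) and are included in the statement as equivalent forms of the regularity of the parabolic curve. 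The main subtle point, and potential pitfall, is to recognize that the line-of-curvature hypothesis forces $V_2$ to vanish \emph{identically} along the entire singular curve and not merely at $p$; without this, cross terms of the form $V_{2,u}(p)\kappa_v(p)$ and their higher-order analogues would enter $\bm{V}^{(m)}\kappa(p)$ and spoil its clean identification with $V_1(p)^m\kappa_\nu^{(m)}(p)$.
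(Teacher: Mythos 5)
Your proposal is correct and follows essentially the same route as the paper's proof: an adapted coordinate system, the observation that the line-of-curvature hypothesis forces $V_2$ to vanish identically along the $u$-axis (the paper writes $V_2=vW$ via the Malgrange preparation theorem, which is equivalent to your "all $u$-partials of $V_2$ vanish at $p$"), the resulting identification $\bm{V}^{(m)}\kappa(p)=V_1(p)^m\kappa_\nu^{(m)}(p)$ under the lower-order vanishing, and Lemma \ref{regularparabolic} with $\kappa_t(p)=0$, $\kappa_c(p)\neq0$ to reduce the regularity of $\kappa^{-1}(0)$ to $\kappa_s(p)\neq0$ before invoking Theorem \ref{typegauss}. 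No gaps.
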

\begin{proof}
Take an adapted coordinate system $(U;u,v)$ centered at $p$. 
Let us denote the principal vector with respect to $\kappa$ by $\bm{V}=V_1(u,v)\partial_u+V_2(u,v)\partial_v$. 
Since $\hat{\gamma}(u)=f(u,0)$ is a line of curvature, $V_2(u,0)=0$ (\cite[Proposition 3.2]{t2}). 
Then by the Malgrange preparation theorem (cf. \cite[Chapter IV]{gg}), 
there exists a function $W:U\to\R$ such that $V_2(u,v)=vW(u,v)$. 
Thus the principal vector $\bm{V}$ is rewritten as $\bm{V}=V_1(u,v)\partial_u+vW(u,v)\partial_v$. 
We note that $V_1(u,0)=\wtil{N}(u,0)=-\langle h(u,0),\nu_v(u,0)\rangle\neq0$ by \cite[(3.3)]{t2}, where 
$h:U\to\R^3\setminus\{0\}$ is a $C^\infty$ map satisfying $f_v=vh$. 

Under this setting, first, second and third order directional derivatives of $\kappa$ in the direction of $\bm{V}$ 
are 
\begin{align*}
\bm{V}\kappa&=V_1(\partial_u\kappa),\quad 
\bm{V}^{(2)}\kappa=V_1^2(\partial_u^2\kappa)+(\partial_uV_1)(\partial_u\kappa),\\
\quad\bm{V}^{(3)}\kappa&=
V_1(V_1^2(\partial_u^3\kappa)+3V_1(\partial_uV_1)(\partial_u^2\kappa)
+((\partial_uV_1)^2+V_1(\partial_u^2V_1))(\partial_u\kappa))
\end{align*}
at $p$, where $\partial_u^i=\partial^i/\partial u^i$ ($i=1,2,3$). 
Thus $p$ is not a ridge point if and only if $\partial_u\kappa\neq0$ at $p$, 
and $p$ is a first (resp. second) order ridge point if and only if 
$\partial_u\kappa=0$ and $\partial_u^2\kappa\neq0$ 
(resp. $\partial_u\kappa=\partial_u^2\kappa=0$ and $\partial_u^3\kappa\neq0$) at $p$. 
If $\partial_u\kappa(p)=0$, then a parabolic curve $\kappa^{-1}(0)$ through $p$ is regular 
if and only if $\partial_v\kappa(p)\neq0$. 
This is equivalent to $\kappa_s(p)\kappa_c(p)^2\neq0$ by Lemma \ref{regularparabolic}. 
Since $\kappa_c(p)\neq0$, $\kappa^{-1}(0)$ passing through $p$ is regular if and only if $\kappa_s(p)\neq0$ 
when $\partial_u\kappa(p)=0$. 

On the other hand, we have $\partial_u\kappa=\kappa_\nu'$, $\partial_u^2\kappa=\kappa_\nu''$ and 
$\partial_u^3\kappa=\kappa_\nu'''$ at $p$ 
since $\kappa=\kappa_\nu$ on the $u$-axis. 
By Theorem \ref{typegauss}, we have the conclusions.
\end{proof}
Propositions \ref{kappanu}, \ref{curvlines} and Corollary \ref{boundedness} yield the following assertion. 
\begin{cor}\label{singbound}
Under the same settings as Proposition \ref{curvlines}, 
a point $p$ is a cusp $($\/resp. swallowtail\/$)$ 
of $\nu$ if and only if $\gamma$ has $2$-point contact $($\/resp. $3$-point contact\/$)$ 
at $p$ with $\kappa^{-1}(0)$. 
Moreover, the following assertions hold:
\begin{enumerate}
\item The Gaussian curvature $K$ of $f$ is rationally bounded but not rationally continuous at $p$ 
if $p$ is a fold of $\nu$.
\item $K$ is rationally continuous at $p$ if $p$ is a cusp or a swallowtail of $\nu$.
\end{enumerate}
\end{cor}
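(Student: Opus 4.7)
The plan is to assemble this corollary by combining the conditions already established in Proposition \ref{curvlines}, Proposition \ref{kappanu}, and Corollary \ref{boundedness}, together with the observation that the line-of-curvature hypothesis forces $\kappa_t(p)=0$.

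First I would translate the condition on the singular curve $\gamma$ having $(k+1)$-point contact with the parabolic curve $\kappa^{-1}(0)$ via Proposition \ref{kappanu}: this requires $4\kappa_t(p)^2+\kappa_s(p)\kappa_c(p)^2\neq 0$ together with $\kappa_\nu(p)=\kappa_\nu'(p)=\cdots=\kappa_\nu^{(k)}(p)=0$ and $\kappa_\nu^{(k+1)}(p)\neq 0$. Under the hypothesis that $\hat{\gamma}$ is a line of curvature, one has $\kappa_t\equiv 0$ along $\gamma$ (see the discussion preceding Proposition \ref{curvlines}), and since $\kappa_c(p)\neq 0$ for a cuspidal edge, the non-degeneracy condition $4\kappa_t(p)^2+\kappa_s(p)\kappa_c(p)^2\neq 0$ collapses to the single inequality $\kappa_s(p)\neq 0$.

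Next I would read off from Proposition \ref{curvlines}(2) and (3) that, for $p$ a cuspidal edge with $\hat{\gamma}$ a line of curvature, $p$ being a cusp of $\nu$ is equivalent to $\kappa_\nu'(p)=0$, $\kappa_\nu''(p)\neq 0$ and $\kappa_s(p)\neq 0$, while $p$ being a swallowtail of $\nu$ is equivalent to $\kappa_\nu'(p)=\kappa_\nu''(p)=0$, $\kappa_\nu'''(p)\neq 0$ and $\kappa_s(p)\neq 0$. Recalling that $p$ is by assumption a parabolic point of $f$, so that $\kappa(p)=\kappa_\nu(p)=0$, these conditions match exactly the criteria for $2$-point (resp. $3$-point) contact obtained in the previous paragraph. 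This gives the main equivalences.

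For the final two claims, I would invoke Corollary \ref{boundedness}: rational boundedness of $K$ at $p$ is equivalent to $\kappa_\nu(p)=0$, while rational continuity is equivalent to $\kappa_\nu(p)=\kappa_\nu'(p)=0$. When $p$ is a fold of $\nu$, Proposition \ref{curvlines}(1) gives $\kappa_\nu'(p)\neq 0$, but $\kappa_\nu(p)=0$ since $p$ is parabolic, so $K$ is rationally bounded but not rationally continuous at $p$. When $p$ is a cusp or a swallowtail of $\nu$, the characterizations just listed yield both $\kappa_\nu(p)=0$ and $\kappa_\nu'(p)=0$, hence $K$ is rationally continuous at $p$. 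Since every step reduces to quoting a previous statement, the proof is essentially mechanical; the only point that requires attention is the simplification of the non-degeneracy condition $4\kappa_t^2+\kappa_s\kappa_c^2\neq 0$ under the line-of-curvature hypothesis, which is the sole place where the hypotheses of Proposition \ref{curvlines} get used beyond what Proposition \ref{kappanu} already contributes.
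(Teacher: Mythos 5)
Your proposal is correct and is exactly the argument the paper intends: the paper gives no separate proof, stating only that Propositions \ref{kappanu}, \ref{curvlines} and Corollary \ref{boundedness} yield the assertion, and your write-up supplies precisely that combination, including the one nontrivial observation that $\kappa_t(p)=0$ (from the line-of-curvature hypothesis) together with $\kappa_c(p)\neq0$ reduces the regularity condition $4\kappa_t^2+\kappa_s\kappa_c^2\neq0$ to $\kappa_s(p)\neq0$.
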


\begin{exa}\label{exa1}
Let $f:\R^2\to\R^3$ be a $C^\infty$ map given by 
$$f(u,v)=\left(u,\frac{1}{2}u^2+\frac{1}{2}v^2,\frac{1}{3}v^3+u^4\right).$$
This gives a cuspidal edge with $S(f)=\{v=0\}$ and $\eta=\partial_v$. 
We set 
$$\nu(u,v)=\frac{1}{\sqrt{1+v^2+(-4u^3+uv)^2}}\left({-4 u^3+uv},{-v},{1}\right).$$
Then $\nu$ is the Gauss map of $f$. 
For $f$, it follows that
$$\kappa_\nu(u)=\frac{12u^2}{\sqrt{1+16u^6}(1+u^2+16u^6)},$$
$\kappa_s(0)=1\neq0$ and $\kappa_t=0$ on the $u$-axis, namely, $f(u,0)$ is a line of curvature.
Moreover, we see that $\tilde{\Lambda}(u,v)=12u^2-v$ is an identifier of singularity of $\nu$. 
Thus the parabolic curve $\kappa^{-1}(0)$ of $f$ is given by the equation $12u^2-v=0$, 
where $\kappa$ is a bounded $C^\infty$ principal curvature of $f$. 
Further, the origin is a non-degenerate singular point of $\nu$. 
By a direct computation, it follows that the singular curve has $2$-point contact at the origin with $\kappa^{-1}(0)$, 
that is, $\kappa_\nu(0)=\kappa_\nu'(0)=0$ and $\kappa_\nu''(0)\neq0$ hold.  
From Proposition \ref{curvlines}, the origin is a cusp of $\nu$ (see Figure \ref{fig:gauss}). 
Moreover, the Gaussian curvature $K$ of $f$ is rationally continuous at the origin by Corollary \ref{singbound}.
\begin{figure}[htbp]
  \begin{center}
    \begin{tabular}{c}

      \begin{minipage}{0.33\hsize}
        \begin{center}
          \includegraphics[clip, width=3.5cm]{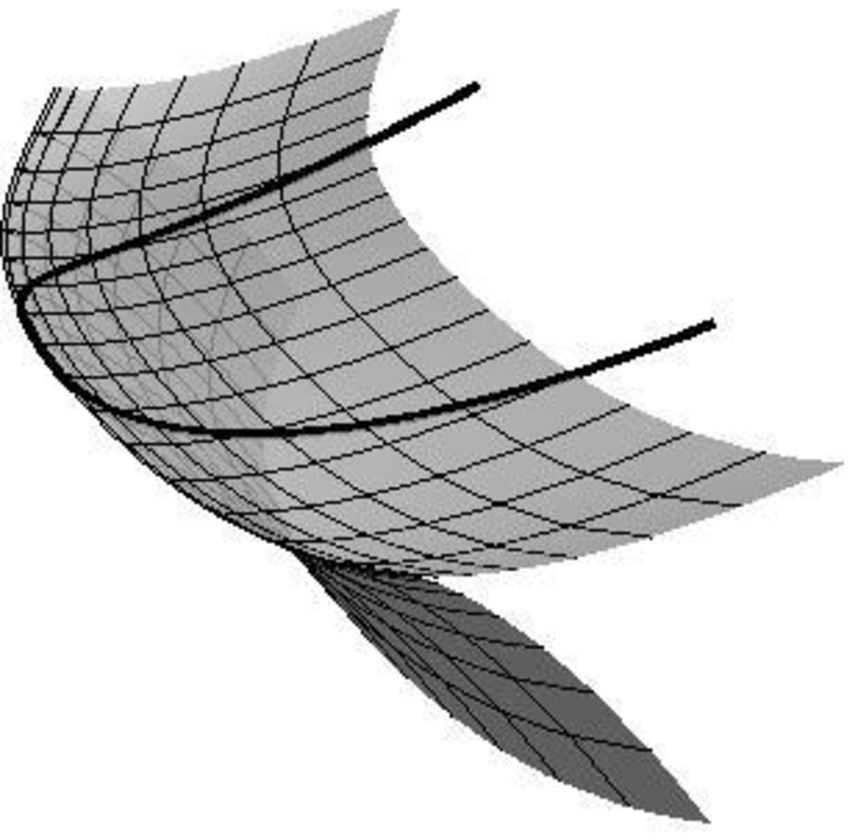}
          \hspace{1.0cm}
        \end{center}
      \end{minipage}

      \begin{minipage}{0.33\hsize}
        \begin{center}
          \includegraphics[clip, width=3.5cm]{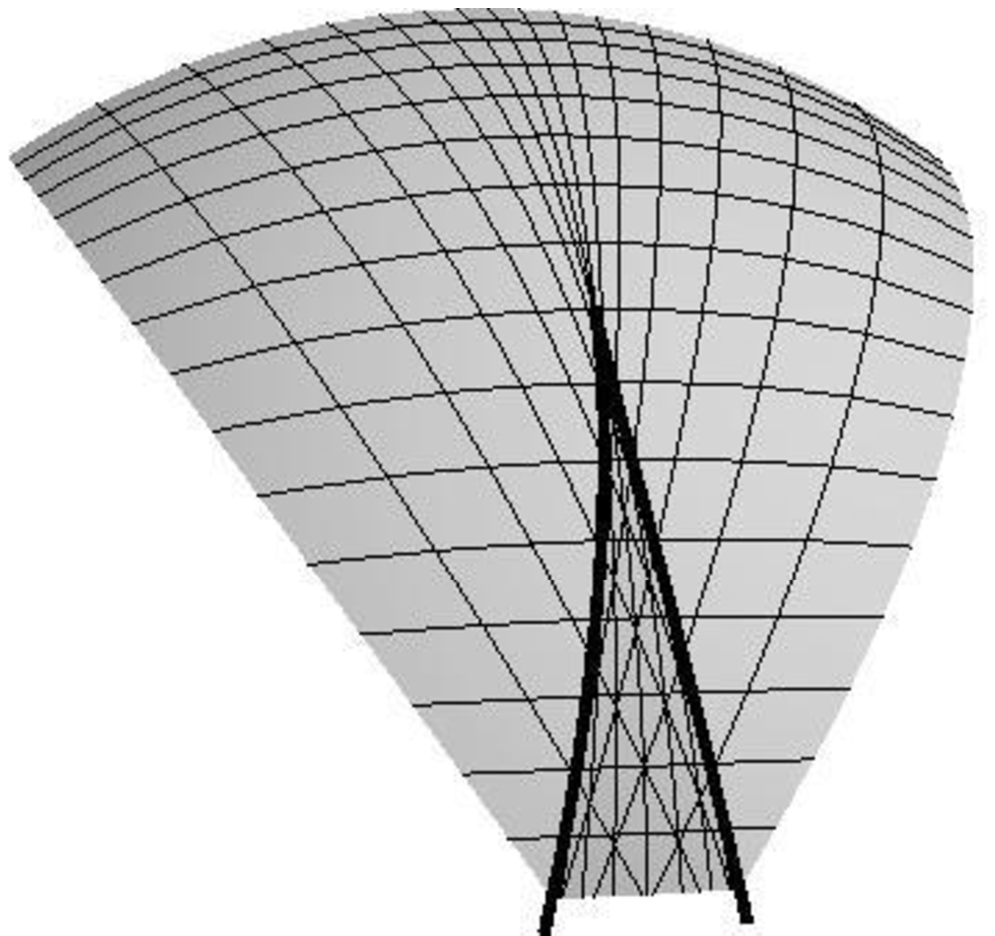}
          \hspace{1.0cm}
        \end{center}
      \end{minipage}

    \end{tabular}
    \caption{The images of cuspidal edge (left) and its Gauss map (right) in Example \ref{exa1}. 
    Thick curves on the cuspidal edge and the Gauss map are the image of the parabolic curve via $f$ and $\nu$, 
    respectively.}
    \label{fig:gauss}
  \end{center}
\end{figure}
\end{exa}

\subsection{Cuspidal edges with bounded Gaussian curvatures and their Gauss maps}
We consider a special case that the Gaussian curvature of a cuspidal edge  is bounded. 
Let $f:\Sig\to\R^3$ be a front, $p\in\Sig$ a cuspidal edge of $f$ and 
$\gamma:(-\epsilon,\epsilon)\to\Sig$ a singular curve through $p$. 
Let $\nu$ the Gauss map of $f$. 
Then it is known that {\it the Gaussian curvature $K$ of $f$ is bounded near $p$ 
if and only if the limiting normal curvature $\kappa_\nu\equiv0$ along $\gamma$} 
(see \cite[Fact 2.12]{msuy} and \cite[Theorem 3.1]{suy}). 
In this case, the set of singular points of $\nu$ coincides with $S(f)$ near $p$. 
More precisely, the bounded principal curvature $\kappa$ of $f$ defined near $p$ 
vanishes along $\gamma$. 
By Lemma \ref{regularparabolic}, $p$ is a non-degenerate singular point of $\nu$ 
if and only if $4\kappa_t(p)^2+\kappa_s(p)\kappa_c(p)^2\neq0$ holds.
\begin{prop}
Let $f:\Sig\to\R^3$ be a front, $p\in\Sig$ a cuspidal edge of $f$ and 
$\nu:\Sig\to S^2$ the Gauss map of $f$. 
Suppose that the Gaussian curvature $K$ of $f$ is bounded sufficiently small neighborhood of $p$. 
Then 
\begin{enumerate}
\item $p$ is a fold of $\nu$ if and only if $\kappa_t(p)\neq0$ and 
$4\kappa_t(p)^2+\kappa_s(p)\kappa_c(p)^2\neq0$ hold.
\item $p$ is a cusp of $\nu$ if and only if $\kappa_t(p)=0$, $\kappa_t'(p)\neq0$ and $\kappa_s(p)\neq0$ hold.
\end{enumerate}
\end{prop}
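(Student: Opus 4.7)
The plan is to reduce everything to Theorem \ref{typegauss}(1) by checking, under the bounded Gaussian curvature hypothesis, when $p$ is a non-ridge point (giving a fold) and when $p$ is a first-order ridge point (giving a cusp). The key simplification is that, by \cite[Fact 2.12]{msuy}, the boundedness of $K$ near $p$ forces $\kappa_\nu \equiv 0$ along the entire singular curve $\gamma$, not merely $\kappa_\nu(p)=0$. Hence, in an adapted coordinate system $(U;u,v)$ around $p$ with $\eta\lambda(u,0)>0$, the bounded principal curvature satisfies $\kappa(u,0)=\kappa_\nu(u)\equiv 0$, so $\partial_u^{k}\kappa(u,0)\equiv 0$ for every $k\ge 1$ along the $u$-axis.

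Once this is fixed, the regularity of the parabolic curve $\kappa^{-1}(0)$ at $p$ becomes a condition purely on $\partial_v\kappa(p)$, which by Lemma \ref{regularparabolic} (using $\kappa_\nu'(p)=0$) is equivalent to $4\kappa_t(p)^2+\kappa_s(p)\kappa_c(p)^2\neq 0$. For the principal vector $\bm{V}=V_1\partial_u+V_2\partial_v$ relative to $\kappa$, since $\partial_u\kappa\equiv 0$ on the $u$-axis, I will get the clean identity
\begin{equation*}
\bm{V}\kappa(p)=V_2(p)\,\partial_v\kappa(p).
\end{equation*}
Plugging in $V_2(u,0)=-\kappa_t(u)\sqrt{\wtil E\wtil G-\wtil F^2}(u,0)$ from \cite[Proposition 3.3]{t2} and the formula \eqref{eq:kv} for $\partial_v\kappa$, I obtain $\bm{V}\kappa(p)\neq 0$ if and only if $\kappa_t(p)\neq 0$ and $4\kappa_t(p)^2+\kappa_s(p)\kappa_c(p)^2\neq 0$. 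By Theorem \ref{typegauss}(1) this yields assertion (1).

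For assertion (2), $\bm{V}\kappa(p)=0$ together with the regularity condition on $\kappa^{-1}(0)$ forces $\kappa_t(p)=0$ and thus $\kappa_s(p)\kappa_c(p)^2\neq 0$; since $\kappa_c(p)\neq 0$ always holds along a cuspidal edge (\cite[Proposition 3.11]{msuy}), this reduces to $\kappa_s(p)\neq 0$. To characterize the first-order ridge condition, I will compute, exactly as in the proof of Proposition \ref{godroncusp},
\begin{equation*}
\bm{V}^{(2)}\kappa(p)=V_1(p)\,\partial_u(\bm{V}\kappa)(p),
\end{equation*}
using that $V_2(p)=0$. From \eqref{eq:kv} and \eqref{eq:kikskt} I get, along the $u$-axis,
\begin{equation*}
\bm{V}\kappa=\frac{1}{2\kappa_c}\bigl(4\kappa_t^{3}+\kappa_i\kappa_c^{2}\bigr)
\left(\frac{(\wtil E\wtil G-\wtil F^{2})^{3}}{\wtil E}\right)^{1/4},
\end{equation*}
and because $\kappa_\nu\equiv 0$ on $\gamma$, \eqref{eq:kikskt} specializes to $\kappa_i=\kappa_s\kappa_t$ identically on the $u$-axis, so $\kappa_i'(p)=\kappa_s(p)\kappa_t'(p)$ when $\kappa_t(p)=0$. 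A direct differentiation using $\kappa_t(p)=0$ then gives $\partial_u(\bm{V}\kappa)(p)\neq 0$ if and only if $\kappa_s(p)\kappa_t'(p)\neq 0$, which combined with $\kappa_s(p)\neq 0$ is the condition $\kappa_t'(p)\neq 0$. Applying Theorem \ref{typegauss}(1) again yields (2).

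The main obstacle is bookkeeping: one must make sure to exploit the stronger hypothesis $\kappa_\nu\equiv 0$ (not only vanishing at $p$) to discard many terms that would otherwise appear in the derivatives of $\bm{V}\kappa$, and to handle $V_2(p)=0$ so that $\partial_v$-derivatives of $\bm{V}\kappa$ drop out of $\bm{V}^{(2)}\kappa(p)$. Once these observations are made, the computation parallels the proof of Proposition \ref{godroncusp} and presents no essentially new difficulty.
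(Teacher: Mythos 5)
Your proposal is correct and follows essentially the same route as the paper: reduce to Theorem \ref{typegauss}(1) using $\kappa(u,0)=\kappa_\nu(u)\equiv0$, identify $\bm{V}\kappa(p)=V_2(p)\partial_v\kappa(p)$ for the fold case, and use $V_2(p)=0$ together with $\bm{V}^{(2)}\kappa(p)=V_1(p)\partial_u(\bm{V}\kappa)(p)$ for the cusp case. The only (immaterial) difference is that in part (2) you extract $\kappa_t'(p)\neq0$ via the invariant formula for $\bm{V}\kappa$ along the $u$-axis and the identity $\kappa_i=\kappa_s\kappa_t$, whereas the paper reads it off directly from $\partial_uV_2(p)$; both computations are equivalent.
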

\begin{proof}
Take an adapted coordinate system $(U;u,v)$ around $p$. 
Let denote by $\kappa$ and $\bm{V}$ the bounded principal curvature on $U$ 
and the principal vector with respect to $\kappa$, respectively. 
Then by the assumption $\kappa(u,0)=\kappa_\nu(u)=0$ hold. 
Since $\partial_u\kappa(p)=\partial_u^2\kappa(p)=0$, 
we have 
$$\bm{V}\kappa(p)=V_2(p)(\partial_v\kappa(p)).$$
Since $V_2(p)\neq0$ is equivalent to $\kappa_t(p)\neq0$, 
and $\partial_v\kappa(p)\neq0$ is equivalent to 
$4\kappa_t(p)^2+\kappa_s(p)\kappa_c(p)^2\neq0$, 
we have the first assertion. 

We show the second assertion. 
We assume that $\bm{V}\kappa(p)=0$. 
Since $p$ is a non-degenerate singular point of $\nu$, $\partial_v\kappa(p)\neq0$. 
This implies that $V_2(p)=0$, namely, $\kappa_t(p)=0$.
Under these conditions, we have 
$$\bm{V}^{(2)}\kappa(p)=V_1(p)(\partial_uV_2(p))(\partial_v\kappa(p)).$$
Since $V_1(p)\neq0$ and $\partial_v\kappa(p)\neq0$, 
$p$ is a cusp of $\nu$ if and only if $\partial_uV_2(p)\neq0$. 
This is equivalent to $\kappa_t'(p)\neq0$. 
Thus we have the second assertion.
\end{proof}
We remark that similar characterizations for flat fronts in 
the hyperbolic $3$-space $H^3$ and the de Sitter $3$-space $S^3_1$ and for 
linear Weingarten fronts in $H^3$ are known (\cite{ku,st}). 
In \cite{ku}, Kokubu and Umehara studied 
global properties of linear Weingarten fronts of Bryant type 
by meromorphic representation formulae. 

\section{Extended height functions on fronts with non-degenerate singular points of the second kind}
Let $f:\Sig\to\R^3$ be a front, $\nu$ its unit normal and 
$p\in\Sig$ a non-degenerate singular point of the second kind of $f$. 
Then we take an adapted coordinate system $(U;u,v)$ centered at $p$ 
satisfying $\lambda_v=\det(h,f_v,\nu)>0$ on the $u$-axis. 
It is known that there exists a {\it strongly adapted coordinate system} $(U;u,v)$ centered at $p$ 
which is an adapted coordinate system with $\langle{f_{uv}(p),f_v(p)}\rangle=0$ (cf. \cite{msuy}). 
Thus we assume that $(U;u,v)$ is a strongly adapted coordinate system centered at $p$ 
for later calculations. 

We now define the following function:
\begin{equation}\label{height}
\phi:U\to\R,\quad \phi(u,v)=\langle{f(u,v),\bm{v}}\rangle-r,
\end{equation}
where $\bm{v}\in S^2$ is a constant vector and $r\in\R$. 
We call this function $\phi$ the {\it extended height function} on $f$ in the direction $\bm{v}$.
For other properties of height functions on regular/singular surfaces, see \cite{bgt3,bgt2,fh3,fh2,ifrt,mn,ot,t1}.
In particular, Oset Sinha and Tari \cite{ot} studied contact between cuspidal edges and planes 
using height functions, and they characterized singularities of height functions by 
geometric invariants of cuspidal edges. 
Moreover, Francisco \cite{franc} investigated functions on a swallowtail and gave a classification of functions on a swallowtail. 
\begin{lem}\label{singularity}
The extended height function $\phi$ as in \eqref{height} is singular at $p$ if $\bm{v}=\pm\nu(p)$.
\end{lem}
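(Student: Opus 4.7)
The plan is direct: to establish that $d\phi(p) = 0$. Differentiating \eqref{height} gives $\phi_u = \langle f_u, \bm{v}\rangle$ and $\phi_v = \langle f_v, \bm{v}\rangle$, so it suffices to verify that both inner products vanish at $p$ when $\bm{v} = \pm\nu(p)$.

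First, I would exploit the defining properties of an adapted coordinate system at a non-degenerate singular point of the second kind. Along the $u$-axis, the null vector field has the form $\eta = \partial_u + \epsilon(u)\partial_v$ with $\epsilon(0) = 0$, and the condition $df(\eta) = 0$ on the singular set yields
\begin{equation*}
f_u(u,0) + \epsilon(u)\, f_v(u,0) \equiv 0.
\end{equation*}
Evaluating at the origin $p$ and using $\epsilon(0) = 0$ gives $f_u(p) = 0$. Consequently $\phi_u(p) = \langle f_u(p), \bm{v}\rangle = 0$ for any choice of $\bm{v}$, and in particular for $\bm{v} = \pm\nu(p)$.

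For the other partial, substituting $\bm{v} = \pm\nu(p)$ yields $\phi_v(p) = \pm\langle f_v(p), \nu(p)\rangle$. By the very definition of the Gauss map of the frontal $f$, $\langle df_q(X_q), \nu(q)\rangle = 0$ for every $X_q \in T_q\Sigma$; in particular $\langle f_v, \nu\rangle \equiv 0$ on $U$, so $\phi_v(p) = 0$. Combining the two vanishings, $d\phi(p) = 0$, i.e.\ $p$ is a singular point of $\phi$. There is essentially no obstacle: the entire argument rests on the adapted normal form at a second-kind singular point and on the defining property of the unit normal, both recorded in Section~\ref{sec:2}.
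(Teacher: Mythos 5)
Your proof is correct and follows essentially the same route as the paper: the paper likewise computes $\phi_u=\langle f_u,\bm{v}\rangle=\langle vh-\epsilon(u)f_v,\bm{v}\rangle$ (which vanishes at $p$ since $v=0$ and $\epsilon(0)=0$, i.e.\ $f_u(p)=0$) and $\phi_v=\langle f_v,\bm{v}\rangle$, which vanishes at $p$ for $\bm{v}=\pm\nu(p)$ by the frontal condition. Your version merely spells out these two vanishings in more detail.
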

\begin{proof}
By direct computation, we have 
$\phi_u=\langle{f_u,\bm{v}}\rangle=\langle{vh-\epsilon(u)f_v,\bm{v}}\rangle$ and 
$\phi_v=\langle{f_v,\bm{v}}\rangle$. 
Thus we have the assertion.
\end{proof}
Taking $\bm{v}=\nu(p)$ and $r=\langle{f(p),\nu(p)}\rangle$, 
it follows that $\phi(p)=\phi_u(p)=\phi_v(p)=0$ by \eqref{height} and Lemma \ref{singularity}. 
Thus we assume that $\bm{v}=\nu(p)$ and $r=\langle{f(p),\nu(p)}\rangle$ in what follows. 
In this case, $\phi$ measures types of contact of $f$ with the limiting tangent plane at $p$, 
where the {\it limiting tangent plane} is a plane perpendicular to the unit normal vector $\nu$. 

For a function germ $\phi:(\R^2,0)\to(\R,0)$ such that $0$ is a singular point of $\phi$, 
{\it corank} of the function $\phi$ at $0$ is given by 
$\operatorname{corank}(\phi)=2-\rank\hess(\phi)$ at $0$ (cf. \cite{bg,ifrt}). 
\begin{prop}\label{hess0}
Let $f:\Sig\to\R^3$ be a front, $p$ a non-degenerate singular point of the second kind, 
$\nu$ the unit normal vector to $f$ and $\phi$ the extended height function on $f$ as in \eqref{height} 
with $\bm{v}=\nu(p)$ and $r=\langle{f(p),\nu(p)}\rangle$. 
Suppose that $\kappa$ is a bounded principal curvature of $f$ near $p$. 
Then 
\begin{itemize}
\item[${\mathrm (1)}$] $p$ is a corank $1$ singular point of $\phi$ if and only if $p$ is not a parabolic point.
\item[${\mathrm (2)}$] $p$ is a corank $2$ singular point of $\phi$ if and only if $p$ is a parabolic point.
\end{itemize}
\end{prop}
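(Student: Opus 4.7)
The plan is to compute the Hessian of $\phi$ at $p$ in a strongly adapted coordinate system and show that it degenerates to a matrix with a single potentially nonzero entry, which can then be identified with a nonzero multiple of the bounded principal curvature $\kappa(p)$. Since $\operatorname{corank}(\phi)(p)=2-\rank\hess(\phi)(p)$, the two assertions will follow at once from whether this entry vanishes or not.

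First I would confirm that $p$ is singular for $\phi$. From $f_u=vh-\epsilon(u)f_v$ with $\epsilon(0)=0$ we get $f_u(p)=0$, so Lemma \ref{singularity} gives $\phi_u(p)=\phi_v(p)=0$. Differentiating once more and evaluating at $p$, I would obtain $f_{uu}(p)=-\epsilon'(0)f_v(p)$ and $f_{uv}(p)=h(p)$. Because $\inner{f_v}{\nu}\equiv0$ and $\inner{h}{\nu}\equiv0$ along $f$, the inner products $\phi_{uu}(p)=\inner{f_{uu}(p)}{\nu(p)}$ and $\phi_{uv}(p)=\inner{f_{uv}(p)}{\nu(p)}$ both vanish, leaving
\begin{equation*}
\hess(\phi)(p)=\begin{pmatrix} 0 & 0\\ 0 & \phi_{vv}(p)\end{pmatrix}.
\end{equation*}
Thus the rank of $\hess(\phi)(p)$ is $1$ or $0$ according to whether $\phi_{vv}(p)\neq0$ or $\phi_{vv}(p)=0$. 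Note that the use of a strongly adapted system is not essential here: only $f_u(p)=0$ and the orthogonality $\nu\perp\{h,f_v\}$ are needed.

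The main step is to identify $\phi_{vv}(p)=\inner{f_{vv}(p)}{\nu(p)}$ with a nonzero multiple of $\kappa(p)$. Differentiating $\inner{f_v}{\nu}\equiv0$ in $v$ gives $\inner{f_{vv}}{\nu}=-\inner{f_v}{\nu_v}=\what{N}$ in the notation of \eqref{fundamentals}. Specializing the formula \eqref{principal} to $v=0$ and using $\epsilon(0)=0$, both $\what{A}$ and $\what{B}$ collapse to $\what{G}(p)\what{L}(p)$ at $p$, so $\kappa(p)=\what{N}(p)/\what{G}(p)$. Therefore
\begin{equation*}
\phi_{vv}(p)=\what{N}(p)=\kappa(p)\,\what{G}(p)=\kappa(p)\,|f_v(p)|^2.
\end{equation*}
Since $|f_v(p)|^2>0$, one has $\phi_{vv}(p)=0$ if and only if $\kappa(p)=0$, i.e.\ if and only if $p$ is a parabolic point. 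Combined with the block form of $\hess(\phi)(p)$, this immediately yields both (1) and (2).

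The only mildly nontrivial obstacle is the algebraic collapse of \eqref{principal} at $p$; everything else is a direct consequence of $p$ being a non-degenerate singular point of the second kind together with the orthogonality relations between $\nu$ and the frame $\{h,f_v\}$.
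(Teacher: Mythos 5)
Your proposal is correct and follows essentially the same route as the paper: both compute $\hess(\phi)(p)$ in an adapted coordinate system using $f_{uu}(p)=-\epsilon'(0)f_v(p)$ and $f_{uv}(p)=h(p)$ to kill the off-diagonal and $(1,1)$ entries, identify $\phi_{vv}(p)=\what{N}(p)$, and conclude via $\kappa(p)=\what{N}(p)/\what{G}(p)$ from \eqref{principal}. Your added remark that the strongly adapted condition is not needed here is accurate but does not change the argument.
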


\begin{proof}
We take an adapted coordinate system $(U;u,v)$ centered at $p$. 
By Lemma \ref{singularity}, we have $\phi_u(p)=\phi_v(p)=0$. 
We consider the Hessian matrix $\hess({\phi})$ of $\phi$ at $p$. 
Since $f_{uu}=vh_u-\epsilon'f_v-\epsilon f_{uv}$ and $f_{uv}=h+vh_v-\epsilon f_{vv}$, we have 
$$\phi_{uu}(p)=\phi_{uv}(p)=0,\quad \phi_{vv}(p)=\langle{f_{vv}(p),\nu(p)}\rangle=\what{N}(p).$$
Hence the Hessian matrix of $\phi$ can be written as 
\begin{equation}\label{hessphi}
\hess({\phi}(p))=
\what{N}(p)\begin{pmatrix} 0 & 0 \\ 0 & 1\end{pmatrix}.
\end{equation}
On the other hand, $\kappa(p)=\kappa_\nu(p)=\what{N}(p)/\what{G}(p)$ holds by \eqref{principal}. 
Thus the results follow. 
\end{proof}
This proposition is a special case of \cite[Theorem 2.11]{mn}. 
In fact, Martins and Nu\~no-Ballesteros considered distance squared functions and 
height functions on a class of surfaces with corank $1$ 
singular points which contain cuspidal edges and swallowtails in \cite{mn} 
(see also \cite{franc,ot,t1,t2}). 
For a regular surface $S$, 
we note that the rank of the Hessian matrix of a height function on $S$ 
is zero at $p\in S$ if and only if $p$ is a {\it flat umbilic point} of $S$ (\cite[Proposition 2.5]{ifrt}). 

We assume that $p$ is a parabolic point in the following. 
This is the case that the Gauss map $\nu$ is singular at $p$, namely, 
the Gaussian curvature is rationally bounded at $p$. 
We now set the number $\Delta_{\phi}$ defined as 
\begin{multline}\label{delta}
\Delta_{\phi}=((\phi_{uuu})^2(\phi_{vvv})^2-6\phi_{uuu}\phi_{uuv}\phi_{uvv}\phi_{vvv}\\
-3(\phi_{uuv})^2(\phi_{uvv})^2+4(\phi_{uuv})^3\phi_{vvv}+4\phi_{uuu}(\phi_{uvv})^3)(p).
\end{multline}
It is known that a $C^\infty$ function germ $g:(\R^2,0)\to(\R,0)$ is $\mathcal{R}$-equivalent to 
$u^3+uv^2$ (resp. $u^3-uv^2$) at $0$ (see Figure \ref{fig:d4}), 
that is, $0$ is a $D_4^+$ singularity (\/resp. $D_4^-$ singularity\/) of $g$ 
 if and only if $j^2g=0$ and $\Delta_{\phi}>0$ (resp. $\Delta_{\phi}<0$), 
where $j^2g$ is the $2$-jet of $g$ at $0$ (see \cite[Lemma 3.1]{s}, see also \cite{fh}). 
By Lemma \ref{singularity} and Proposition \ref{hess0}, we see that $j^2\phi=0$ holds. 
\begin{figure}[htbp]
  \begin{center}
    \begin{tabular}{c}

      \begin{minipage}{0.33\hsize}
        \begin{center}
          \includegraphics[clip, width=3.5cm]{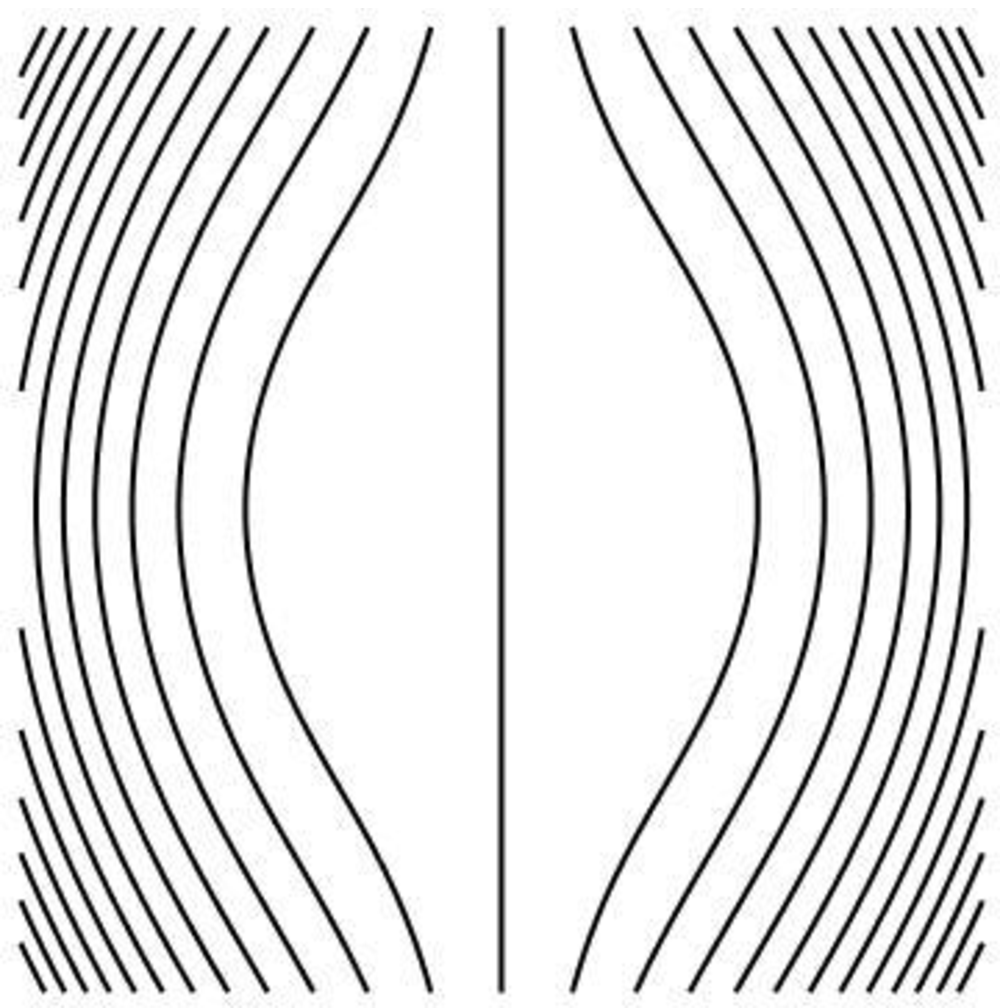}
          \hspace{1.0cm}
        \end{center}
      \end{minipage}

      \begin{minipage}{0.33\hsize}
        \begin{center}
          \includegraphics[clip, width=3.5cm]{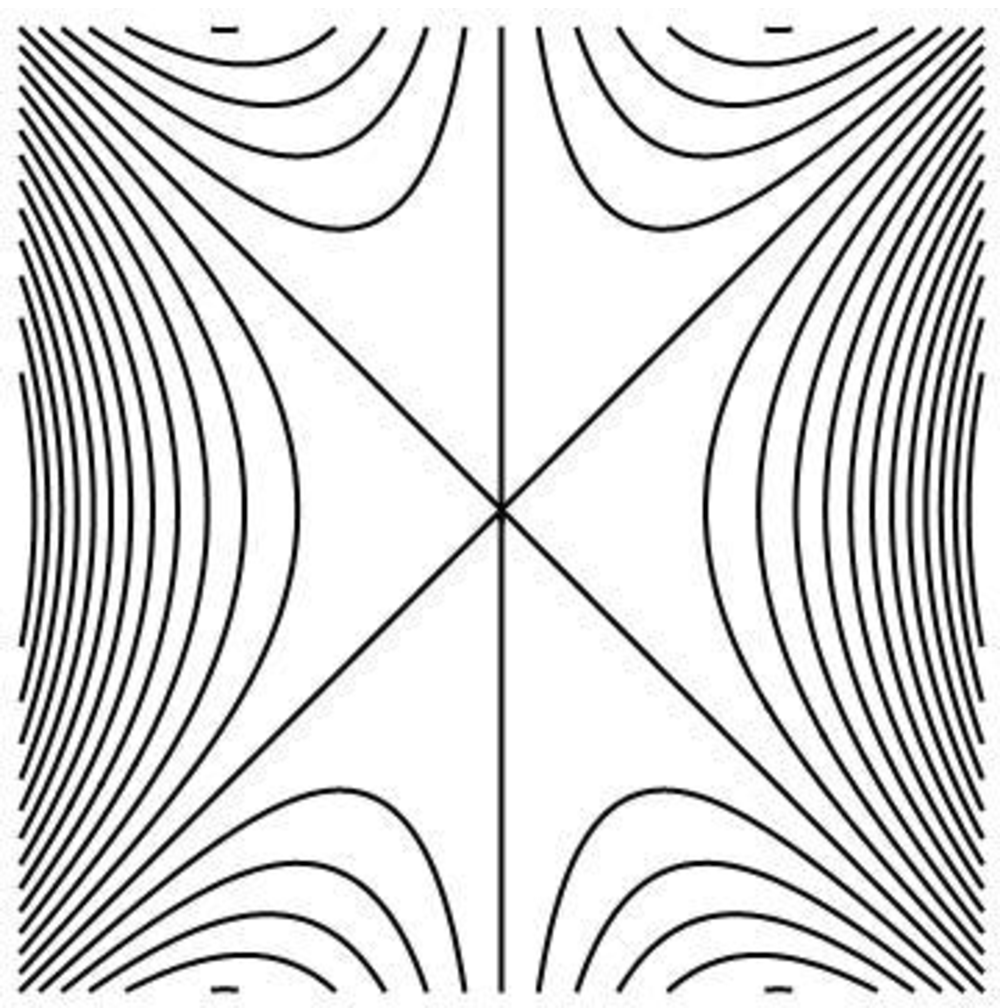}
          \hspace{1.0cm}
        \end{center}
      \end{minipage}

    \end{tabular}
    \caption{Level sets of functions $u^3+uv^2$ (left) and $u^3-uv^2$ (right).}
    \label{fig:d4}
  \end{center}
\end{figure}

\begin{thm}\label{D4phi}
Under the above conditions, the function $\phi$ as in \eqref{height} with 
$\bm{v}=\nu(p)$ and $r=\langle{f(p),\nu(p)}\rangle$
has a $D_4$ singularity at $p$ 
if and only if $p$ is a parabolic point and not a ridge point of a front $f$.
\end{thm}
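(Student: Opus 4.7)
The approach is to apply the $D_4$ criterion stated just before the theorem ($j^2\phi(p)=0$ and $\Delta_\phi(p)\neq 0$) together with Proposition~\ref{hess0}. By that proposition, $j^2\phi(p)=0$ is equivalent to $p$ being a parabolic point, so both sides of the biconditional already force this condition. It therefore remains to show, under the parabolic assumption, that $\Delta_\phi(p)\neq 0$ is equivalent to $\bm{V}\kappa(p)\neq 0$, i.e., to $p$ not being a ridge point.

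I would work in a strongly adapted coordinate system $(U;u,v)$ centered at $p$, so that in addition to the usual hypotheses one has $\what{F}(p)=\langle h(p),f_v(p)\rangle=0$. Differentiating $f_u=vh-\epsilon(u)f_v$ and $f_{uv}=h+vh_v-\epsilon f_{vv}$ and taking inner products with the constant vector $\nu(p)$, using $\langle h,\nu\rangle=\langle f_v,\nu\rangle=0$, $\langle h_u,\nu\rangle=\what{L}$, $\langle h_v,\nu\rangle=\what{M}$, and the parabolic assumption $\what{N}(p)=0$, yields
\[
\phi_{uuu}(p)=0,\qquad \phi_{uuv}(p)=\what{L}(p),\qquad \phi_{uvv}(p)=2\what{M}(p),
\]
while $\phi_{vvv}(p)=\langle f_{vvv}(p),\nu(p)\rangle$ is left unresolved at this stage. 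Since $\phi_{uuu}(p)=0$, the expression \eqref{delta} collapses to
\[
\Delta_\phi(p)=4\what{L}(p)^2\bigl[\what{L}(p)\phi_{vvv}(p)-3\what{M}(p)^2\bigr].
\]
The front condition (that $L_f=(f,\nu)$ is an immersion), combined with $f_u(p)=0$ and $\nu_u(p)=-(\what{L}/\what{E})(p)\,h(p)$ (which follows from $\what{F}(p)=0$), forces $\what{L}(p)\neq 0$.

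The main step is the identity $\what{L}\phi_{vvv}-3\what{M}^2=\what{L}\what{N}_v-\what{M}\what{N}_u-\what{M}^2$ at $p$. Using also $\nu_v(p)=-(\what{M}/\what{E})(p)\,h(p)$ and differentiating $\what{N}=\langle f_{vv},\nu\rangle$ in $u$ and $v$ gives
\[
\what{N}_u(p)=2\what{M}(p)-\tfrac{\what{L}(p)}{\what{E}(p)}\langle f_{vv},h\rangle(p),\qquad \phi_{vvv}(p)=\what{N}_v(p)+\tfrac{\what{M}(p)}{\what{E}(p)}\langle f_{vv},h\rangle(p),
\]
and the auxiliary inner product $\langle f_{vv},h\rangle(p)$ cancels cleanly between the two sides. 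At $p$ one has $\bm{V}(p)=-\what{M}(p)\partial_u+\what{L}(p)\partial_v$ from the proof of Theorem~\ref{typegauss}, and differentiating the formula for $\kappa$ in \eqref{principal} at $p$ yields $\kappa_u(p)=\what{N}_u(p)/\what{G}(p)$ and $\kappa_v(p)=[\what{L}\what{N}_v-\what{M}^2](p)/(\what{G}\what{L})(p)$, so that $\what{G}(p)\,\bm{V}\kappa(p)=\what{L}\what{N}_v-\what{M}\what{N}_u-\what{M}^2$ at $p$. Combining these yields
\[
\Delta_\phi(p)=4\,\what{L}(p)^2\,\what{G}(p)\,\bm{V}\kappa(p),
\]
and since $\what{L}(p)\what{G}(p)\neq 0$ the theorem follows. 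The main obstacle I anticipate is the bookkeeping of the auxiliary term $\langle f_{vv},h\rangle(p)$: its cancellation is precisely where the strongly adapted hypothesis $\what{F}(p)=0$ enters, so it is essential to commit to that coordinate choice before beginning the computation.
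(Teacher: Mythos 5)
Your proposal is correct and follows essentially the same route as the paper: reduce to the criterion $j^2\phi(p)=0$ and $\Delta_\phi(p)\neq0$ via Proposition~\ref{hess0}, compute the third-order derivatives of $\phi$ in a strongly adapted coordinate system to get $\Delta_\phi(p)=4\what{L}(p)^2\bigl(\what{L}\what{N}_v-\what{M}(\what{N}_u+\what{M})\bigr)(p)$, and match this against $\what{G}(p)\,\bm{V}\kappa(p)$. The only cosmetic difference is that the paper substitutes the closed form $\phi_{vvv}(p)=\what{N}_v-\tfrac{\what{M}}{\what{L}}(\what{N}_u-2\what{M})$ directly, whereas you carry the auxiliary term $\langle f_{vv},h\rangle(p)$ and let it cancel; the two computations are algebraically identical.
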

\begin{proof}
Take a strongly adapted coordinate system $(U;u,v)$ centered at $p$. 
Since $f$ is a front at $p$, $\what{L}(p)\neq0$ holds. 
Thus we may assume that $\what{L}(p)>0$. 
By Proposition \ref{hess0}, $j^2\phi=0$ if and only if $p$ is a parabolic point. 
Hence we suppose that $p$ is a parabolic point, namely $\what{N}(p)=0$. 

First, we consider the number $\Delta_{\phi}$ as in \eqref{delta}. 
Since $f_{uuu}=vh_{uuu}-\epsilon'' f_v-2\epsilon'f_{uv}-\epsilon f_{uuv}$ and $f_{uv}=h+vh_v-\epsilon f_{vv}$, 
we have $\phi_{uuu}(p)=0$. 
Thus it holds that 
$$\Delta_{\phi}=(\phi_{uuv})^2(p)(4\phi_{uuv}\phi_{vvv}-3(\phi_{uvv})^2)(p).$$
By direct computations, we see that 
$$\phi_{uuv}=\what{L},\quad\phi_{uvv}=2\what{M},\quad\phi_{vvv}
=\what{N}_v-\frac{\what{M}}{\what{L}}(\what{N}_u-2\what{M})$$
at $p$. 
Therefore we have 
\begin{equation}\label{delta2}
\Delta_{\phi}=4\what{L}(p)^2(\what{L}(p)\what{N}_v(p)-\what{M}(p)(\what{N}_u(p)+\what{M}(p))).
\end{equation}

Next, we consider the condition of ridge points. 
Under the above settings, $\kappa$ is given by \eqref{principal}. 
The differentials $\partial_u\kappa=\partial\kappa/\partial u$ and $\partial_v\kappa=\partial\kappa/\partial v$ are 
$$\partial_u\kappa(p)=\frac{\what{N}_u(p)}{\what{G}(p)},\quad
\partial_v\kappa(p)=\frac{\what{L}(p)\what{N}_v(p)-\what{M}(p)^2}{\what{G}(p)\what{L}(p)}.$$
The principal vector $\bm{V}$ relative to $\kappa$ is $\bm{V}=(-\what{M},\what{L})$ at $p$. 
Thus the directional derivative $\bm{V}\kappa$ of $\kappa$ in the direction $\bm{V}$ at $p$ is given by 
\begin{equation}\label{ridge}
\bm{V}\kappa(p)=\frac{1}{\what{G}(p)}(\what{L}(p)\what{N}_v(p)-\what{M}(p)(\what{N}_u(p)+\what{M}(p))).
\end{equation}
Comparing \eqref{delta2} and \eqref{ridge}, we get the conclusion.
\end{proof}

By Theorems \ref{typegauss} and \ref{D4phi}, we have the following assertion.
\begin{cor}
Under the same conditions as Theorem \ref{D4phi}, 
the function $\phi$ as in \eqref{height} with $\bm{v}=\nu(p)$ and $r=\langle f(p),\nu(p)\rangle$ 
has a $D_4$ singularity at $p$ if and only if the Gauss map $\nu$ has a fold singularity at $p$.
\end{cor}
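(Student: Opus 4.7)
The plan is to read off this corollary as an essentially formal consequence of combining Theorem \ref{D4phi} with case \eqref{critgauss:1} of Theorem \ref{typegauss}; no additional local computation is required. Theorem \ref{D4phi} says that $\phi$ has a $D_4$ singularity at $p$ if and only if (i) $p$ is a parabolic point of $f$ and (ii) $p$ is not a ridge point. Theorem \ref{typegauss}\eqref{critgauss:1} says that, \emph{under the additional assumption} (iii) that $p$ is a regular point of the bounded principal curvature $\kappa$, the Gauss map $\nu$ has a fold at $p$ precisely when (ii) holds. Furthermore, Proposition \ref{singptgauss} already forces (i) whenever $\nu$ is singular at $p$. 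The corollary therefore hinges on reconciling the hypothesis (iii) with the non-ridge condition (ii).

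The key observation is that (ii) automatically forces (iii). Indeed, if $p$ were a singular point of $\kappa$, then $\kappa_u(p)=\kappa_v(p)=0$, so for the principal vector $\bm{V}=V_1\partial_u+V_2\partial_v$ associated with $\kappa$ one would have
\begin{equation*}
\bm{V}\kappa(p)=V_1(p)\kappa_u(p)+V_2(p)\kappa_v(p)=0,
\end{equation*}
which by definition means that $p$ is a ridge point. Contrapositively, (ii) implies (iii).

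Assembling the pieces, the implication "$D_4 \Rightarrow$ fold" follows by using this observation to promote the conclusions (i) and (ii) of Theorem \ref{D4phi} to (i), (ii), (iii), and then invoking Theorem \ref{typegauss}\eqref{critgauss:1} to conclude that $\nu$ has a fold at $p$. Conversely, if $\nu$ has a fold at $p$, then $\nu$ is singular at $p$, so Proposition \ref{singptgauss} yields (i); via the identification $\til{\Lambda}=\kappa$ of the identifier of singularity of $\nu$ with the bounded principal curvature, the fold criterion forces $\bm{V}\kappa(p)\neq 0$, which by the automatic-ridge observation simultaneously delivers both (iii) and (ii), so Theorem \ref{D4phi} produces the $D_4$ singularity of $\phi$. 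The only conceptual step beyond quoting the two theorems is the automatic-ridge implication displayed above, which I expect to be the only point that requires any thought; every other step is a direct citation.
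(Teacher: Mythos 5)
Your proof is correct and follows the paper's own (one-line) argument, which simply cites Theorems \ref{typegauss} and \ref{D4phi}. Your extra observation that the non-ridge condition automatically forces $p$ to be a regular point of $\kappa$ (since $d\kappa(p)=0$ would give $\bm{V}\kappa(p)=0$) correctly fills in the hypothesis-matching step that the paper leaves implicit.
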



\end{document}